\numberwithin{equation}{section}
\newtheorem{thm}{Theorem}[section]
\newtheorem{lem}[thm]{Lemma}
\newtheorem{cor}[thm]{Corollary}
\newtheorem{prop}[thm]{Proposition}
\newtheorem{rem}[thm]{Remark}
\newtheorem{defin}[thm]{Definition}
\newtheorem{defins}[thm]{Definitions}
\newtheorem{remi}[thm]{Reminder}
\newtheorem{nota}[thm]{Notation}
\def\sm{ \smallskip}
\def\e{ \begin{enumerate} \it }
\def\ee{\end{enumerate} }
\def\en{\operatorname{end}}
\def\beg{\operatorname{beg}}
\def\height{\operatorname{height}}
\def\depth{\operatorname{depth}}
\def\reg{\operatorname{reg}}
\def\Hom{\operatorname{Hom}}
\def\Spec{\operatorname{Spec}}
\def\Proj{\operatorname{Proj}}
\def\Var{\operatorname{Var}}
\def\beg{\operatorname{beg}}
\def\en{\operatorname{end}}
\def\Proj{\operatorname{Proj}}
\def\gendeg{\operatorname{gendeg}}
\def\length{\operatorname{length}}
\def\Ass{\operatorname{Ass}}
\def\dim{\operatorname{dim}}
\def\sk{\smallskip\par}
\newcommand{\p}{\mathfrak{p}}
\newcommand{\X}{\mathcal{P}}
\newcommand{\MS}{\mathbb{S}}
\newcommand{\Z}{\mathbb{Z}}
\newcommand{\N}{\mathbb{N}}
\newcommand{\D}{{\mathcal D}}
\newcommand{\C}{\mathcal{C}}
\begin{document}
\bibliographystyle{amsplain}

\author{Markus Brodmann}
\address{Markus Brodmann, University of Z\"urich, Institute of Mathematics, Winterthurerstrasse 190, 8057 Z\"urich. }
\email{brodmann@math.uzh.ch}

\author{Maryam Jahangiri}
\address{Maryam Jahangiri, School of Mathematics and Computer Sciences, Damghan University of Basic Sciences, Damghan,  Iran.}
\email{jahangiri@dubs.ac.ir}

\author{Cao Huy Linh}
\address{Cao Huy Linh, Department of Mathematics, College of Education, Hue University, 32 Le Loi, Hue City, Vietnam.}
\email{huylinh2002@yahoo.com}

\thanks{\today }
\subjclass[2000]{}

\title[Boundedness of Cohomology]
{Boundedness of Cohomology}

\begin{abstract}
Let $d \in \N$ and let $\D^d$ denote the class of all pairs
$(R,M)$ in which $R = \bigoplus_{n \in \N_0} R_n$ is a Noetherian
homogeneous ring with Artinian base ring $R_0$ and such that $M$
is a finitely generated graded $R$-module of dimension $\leq d$.

\sm

The cohomology table of a pair $(R,M) \in \D^d$ is defined as the
family of non-negative integers $d_M:= (d^i_M(n))_{(i,n) \in
\N \times \Z}$. We say that a subclass
$\mathcal{C}$ of $\D^d$ is of finite cohomology if the set $\{d_M
\mid \ (R,M) \in \C\}$ is finite. A set $\mathbb{S} \subseteq
\{0,\cdots ,d-1\}\times \Z$ is said to bound cohomology, if for
each family $(h^\sigma)_{\sigma \in \mathbb{S}}$ of non-negative
integers, the class $\{(R,M) \in \D^d\mid \ d^i_M(n) \leq h^{(i,n)}
\ \mbox{ for all} \ (i,n) \in \mathbb{S}\}$ is of finite
cohomology. Our main result says that this is the case if and only if
$\mathbb{S}$ contains a quasi diagonal, that is a set of the form
$\{(i,n_i)| \ i=0,\cdots ,d-1\}$ with integers $n_0> n_1 > \cdots
> n_{d-1}$.

\sm

We draw a number of conclusions of this boundedness criterion.
\end{abstract}
\maketitle
\section{Introduction} \sk
This paper continues our investigation [6], which was driven by
the question \it{"What bounds cohomology of a projective scheme?"}

\smallskip

{\rm A considerable number of contributions has been given to this
theme, mainly under the aspect of bounding some cohomological
invariants in term of other invariants (see [1], [2], [3], [4], [7],
 [8], [9], [11], [12], [13], [15], [16], [17], [18], [19], [21],
[22] for example).

\smallskip

Our aim is to start from a different point of view, focussing on
the notion of cohomological pattern (s. [5]). So, our main result
characterizes those sets $\mathbb{S}\subseteq \{0, \cdots,
d-1\}\times \mathbb{Z}$  "which bound cohomology of projective
schemes of dimension $< d$".

\smallskip

To make this precise, fix a positive integer $d$ and let ${\D}^d$
be the class of all pairs $(R, M)$ in which $R= \bigoplus_{n\geq
0}R_n$ is a Noetherian homogeneous ring with Artinian base ring
$R_0$ and $M$ is a finitely generated graded
 $R$-module with dim$(M)\leq d$. In this situation let $R_+= \bigoplus_{n> 0}R_n$ denote the irrelevant ideal of $R$.

\smallskip

For each $i\in {\N}_0$
consider the graded $R$-module $D^i_{R_+}(M)$, where $D^i_{R_+}$
denotes the $i$-th right derived functor of the $R_+$-transform
functor $D_{R_+}(\bullet):= \displaystyle \lim_{n
\longrightarrow\infty}\Hom_R((R_+)^n, \bullet )$. In addition, for each $n\in \Z$ let $d^i_M(n)$ denote
the (finite) $R_0$-length of the $n$-th graded component $D^i_{R_+}(M)_n$ of
$D^i_{R_+}(M)$.

\smallskip

Finally, for $(R, M)\in {\D}^d$ let us consider the so called
cohomology table of $(R, M)$, that is the family of non negative
integers
\[d_M:= (d^i_M(n))_{(i, n)\in \mathbb{N}_0 \times \mathbb{Z}}.\]
A subclass $\mathcal{C}\subseteq {\D}^d$ is said to be of finite
cohomology if the set $\{d_M\mid \ (R, M)\in {\C}\}$ is finite.
The class $\C$ is said to be of bounded cohomology if the set
$\{d^i_M(n)\mid \ (R, M)\in {\C}\}$ is finite for all pairs $(i,
n)\in \mathbb{N}_0 \times \mathbb{Z}$. It turns out that these two
conditions are booth equivalent to the condition that the class
$\C$ is of finite cohomology "along some diagonal", e.g. there is
some $n_0\in \mathbb{Z}$ such that the set $\triangle_{\C,
n_0}:=\{d^i_M(n_0-i)\mid \ (R, M)\in \C, 0\leq i<d \}$ is finite
(s. Theorem 3.5).

\smallskip

So, if one bounds the values of $d^i_M(n)$ along a "diagonal
subset" $$\{(j, n_0 -j)\mid \ j=0, \cdots, d-1\}\subseteq \{0,
\cdots, d-1 \}\times \Z$$ for an arbitrary integer $n_0$ one cuts
out a subclass $\mathcal{C}\subseteq \D^d$ of finite cohomology.
Motivated by this observation we say that the subset
$\MS\subseteq \{0, \cdots, d-1\}\times \Z $ bounds cohomology in
the class $\mathcal{C}\subseteq \D^d$ if for each family
$(h^{\sigma})_{\sigma \in \MS}$ of non-negative integers
$h^{\sigma}\in {\N}_0$ the class $$\{(R, M)\in \mathcal{C} \mid \
\forall (i, n)\in \MS: \ d^i_M(n)\leq h^{(i, n )}\}$$ is of finite
cohomology. Now, we may reformulate our previous result by saying
that for arbitrary $n_0$ the diagonal set $\{(j, n_0-j)\mid \
j=0, \cdots, d-1\}$ bounds cohomology  in $\D^d$. It seems rather
natural to ask, whether one can characterize the shape of those
subsets $\MS\subseteq \{0, \cdots, d-1\}\times \Z$ which bound
cohomology in $\D^d$. This is indeed done by our main result (s.
Corollary 4.10):

\sm

\renewcommand{\descriptionlabel}[1]%
             {\hspace{\labelsep}\textrm{#1}}
\begin{description}
\setlength{\labelwidth}{13mm} \setlength{\labelsep}{1.3mm}
\setlength{\itemindent}{0mm}
\item[ ]  \it{A subset $\MS\subseteq \{0, \cdots, d-1\}\times \Z$ bounds cohomology in $\D^d$ if
and only if it contains a quasi-diagonal, that is a set of the
form $\{(i, n_i)\mid \ i=0, \cdots, d-1\}$ with}
\[n_0> n_1> \cdots> n_{d-1}.\]
\end{description}

\sm

Our next aim is to apply the previous  result in order to cut out classes $\mathcal{C}\subseteq \D^d$ of
finite cohomology by fixing some numerical invariants which are
defined on the class $\mathcal{C}$. A finite family
$(\mu_i)_{i=1}^{r}$ of numerical invariants $\mu_i$ on
$\mathcal{C}$ is said to bound
cohomology in $\mathcal{C}$ if for all $n_1, \cdots, n_r\in
\Z\cup \{\pm\infty\}$ the class $\{(R, M)\in \mathcal{C}\mid \
\mu_i(M)=n_i \ \mbox{for} \ i=1, \cdots, r\}$ is of finite
cohomology.

\sm

We define a numerical invariant $\varrho: \D^d\rightarrow{\N}_0$
by setting $\varrho(M):= d^0_M(\reg^2(M))$, where $\reg^2(M)$
denotes the Castelnuovo-Mumford regularity of $M$ at and above
level 2. Then, we show (s. Theorem 5.8):}

\sm

\renewcommand{\descriptionlabel}[1]%
             {\hspace{\labelsep}\textrm{#1}}
\begin{description}
\setlength{\labelwidth}{13mm} \setlength{\labelsep}{1.3mm}
\setlength{\itemindent}{0mm}
\item[ ] {\it The pair of invariants $(\reg^2, \varrho)$ bounds
cohomology in $\D^d$.}
\end{description}

\sm

{\rm As an application of this  we prove (s. Theorem 5.9 and
Corollary 5.10)}

\sm

\renewcommand{\descriptionlabel}[1]%
             {\hspace{\labelsep}\textrm{#1}}
\begin{description}
\setlength{\labelwidth}{13mm} \setlength{\labelsep}{1.3mm}
\setlength{\itemindent}{0mm}
\item[ ] \it{ Fix a polynomial $p\in \mathbb{Q}[t]$ and an integer $r$.
Let $\mathcal{C}\subseteq \D^d $ be the class of all pairs
  $(R, M)$ such that $M$ is a graded submodule of a
finitely generated graded $R$-module $N$ with Hilbert polynomial $p_N= p$ and
$\reg^2(N)\leq r$. Then $\reg^2$ bounds
  cohomology in $\mathcal{C}$.}
\end{description}

\sm

{\rm An immediate consequence of this is (s. Corollary 5.11):}

\sm

\renewcommand{\descriptionlabel}[1]%
             {\hspace{\labelsep}\textrm{#1}}
\begin{description}
\setlength{\labelwidth}{13mm} \setlength{\labelsep}{1.3mm}
\setlength{\itemindent}{0mm}
\item[ ]  \it{Let $(R, N)\in \D^d$, let $r\in \Z$ and let $M$ run through
all graded submodules $M\subseteq N$
  with $\reg^2(M)\leq r$. Then only finitely many
cohomology tables $d_M$ occur.} \end{description}

\sm

{\rm As applications of this, we generalize two finiteness results
of Hoa-Hyry [17] for local cohomology modules of graded ideals in
a polynomial ring over a field to graded submodules $M\subseteq N$
for a given pair $(R, N)\in \D^d$ (s. Corollaries 5.13 and 5.14).

\sm

In order to translate our results to sheaf cohomology of
projective schemes observe that for all $(i, n)\in
\mathbb{N}_0\times \mathbb{Z}$ and all pairs $(R, M)\in {\D}^d$
we have $H^i(X, \mathcal{F}(n))\cong D^i_{R_+}(M)_n$, where $X:=
\Proj(R)$ and $\mathcal{F}:=\tilde{M}$ is the coherent sheaf of
$\mathcal{O}_X$-modules induced by $M$ (see [10, chap. 20] for
example).}

 \vskip 1 cm
\section{ Preliminaries}

{\rm In this section we recall a few basic facts which shall be
used later in our paper.}

\begin{nota}\label{2.1}
\rm{Let $R = \oplus_{n \geq 0}R_n$  be a homogeneous
Noetherian ring, so that $R$ is positively graded, $R_0$ is
Noetherian and $R = R_0[l_0,\cdots,l_r]$ with finitely many elements
$l_0,\cdots,l_r \in R_1$. Let $R_+$ denote the irrelevant ideal
$\oplus_{n > 0}R_n$ of $R$.}  \hfill $\bullet $
\end{nota}

\begin{remi}\label{2.2}

\textit{(Local cohomology and Castelnuovo-Mumford regularity)}
\rm{ \noindent(A)  Let $i \in {\N} _0:= \{0, 1, 2, \cdots \}.$ By
$H_{R_+}^i(\bullet)$ we denote the $i$-th local cohomology functor
with respect to $R_+$. Moreover by $D_{R_+}^i(\bullet)$ we denote
the $i$-th right derived functor of the ideal transform functor
$\displaystyle D_{R_+}(\bullet)= \lim _ {_{n \rightarrow \infty
}}\Hom_R ((R_+)^n,\bullet)$ with respect to $R_+$.

\smallskip

\noindent(B) Let  $M:= \oplus_{n \in {\Z}}M_n$ be a  graded
$R$-module. Keep in mind that in this situation the $R$-modules
$H_{R_+}^i(M)$ and $D_{R_+}^i(M)$ carry natural gradings. Moreover
we then have a natural exact sequence of graded $R$-modules

\smallskip

\begin{enumerate}
\item[\ (i)]\hskip1cm $0\longrightarrow H_{R_+}^0(M)\longrightarrow
M\longrightarrow D_{R_+}^0(M)\longrightarrow
H_{R_+}^1(M)\longrightarrow 0$
\end{enumerate}

\smallskip

and natural isomorphisms of graded $R$-modules

\smallskip

\begin{enumerate}
\item[(ii)]\hskip1cm$D_{R_+}^i(M) \cong H_{R_+}^{i+1}(M) \ \ \
\text{for all}\ \ i
> 0.$
\end{enumerate}

\smallskip

\noindent(C) If $T$ is a graded $R$-module and $n\in {\Z }$, we use
$T_n$ to denote the $n$-th graded component of $T$. In particular,
we define the} {\it beginning} and the \it{end} \rm{of $T$
respectively by

\smallskip

\begin{enumerate}
\item[(i)]\hskip1cm$\beg(T):= \inf\{ n\in {\Z }| T_n \ne 0\}$,
\end{enumerate}

\smallskip

\begin{enumerate}
\item[(ii)]\hskip1cm$\en(T):= \sup\{ n\in {\Z }| T_n \ne 0\}$.
\end{enumerate}

\smallskip

with the standard convention that $\inf \emptyset = \infty$ and}
$\sup \emptyset = -\infty$.

\smallskip

\noindent(D) If the graded $R$-module $M$ is finitely generated, the
$R_0$-modules $H_{R_+}^{i}(M)_n$ are all finitely generated and
vanish as well for all $n \gg 0$ as for all $i > \dim(M)$. So, we have
\[-\infty \leq a_i(M):= \en(H_{R_+}^{i}(M)) < \infty \ \ \text{for all} \ \ i \geq 0\]
with $a_i(M):= -\infty$ for all $i> \dim(M).$

\smallskip

If $k \in {\N} _0$, the \it{Castelnuovo-Mumford regularity of} $M$
\it{at and above level} $k$ \rm{is defined by}

\smallskip

\begin{enumerate}
\item[ ] \hskip1cm$\reg^k(M): = \sup \{a_i(M) + i|\  i\geq k\} \,\,\, (< \infty ).$
\end{enumerate}

\smallskip

The \it {Castelnuovo-Mumford regularity} of $M$ \rm{is
defined by} $\reg(M):= \reg^0(M).$

\noindent(E) We also shall use the {\it generating degree} of $M$, which is defined by
\[\gendeg(M)= \inf \{n\in \Z\mid \ M= \displaystyle \Sigma_{m\leq n}R M_m\}.\]
If the graded $R$-module $M$ is finitely generated, we have $\gendeg(M)\leq \reg(M)$. \hfill $\bullet $

\end{remi}

\begin{remi}\label{2.3} \it{(Cohomological Hilbert functions)}
\rm{  (A) Let $i\in {\N}_0$ and assume that the base ring $R_0$ is
Artinian. Let $M$ be a finitely generated graded $R$-module. Then,
the graded $R$-modules $H_{R_+}^{i}(M)$ are Artinian. In particular for all $i\in {\N}_0$ and all $n\in {\Z}$
we may define the non-negative integers

\smallskip

\begin{enumerate}
\item[(i)]\hskip1cm$h_M^i(n):= \length_{R_0}(H_{R_+}^{i}(M)_n)$,
\end{enumerate}

\smallskip

\begin{enumerate}
\item[(ii)]\hskip1cm$d_M^i(n):= \length_{R_0}(D_{R_+}^{i}(M)_n).$
\end{enumerate}

\smallskip

Fix $i\in {\N}_0$. Then the functions

\begin{enumerate}
\item[(iii)]\hskip1cm$h_M^i:{\Z}\rightarrow {\N}_0, \ \ n\mapsto h_M^i(n)$,
\end{enumerate}

\smallskip

\begin{enumerate}
\item[(iv)]\hskip1cm$d_M^i:{\Z}\rightarrow {\N}_0, \ \ n\mapsto d_M^i(n)$
\end{enumerate}

are called the $i$-th \it {Cohomological Hilbert functions} \rm{of
the} \it{first} \rm{respectively the} \it{second kind} \rm{of} $M$.

\smallskip

\noindent(B) Let $M$ be a
finitely generated graded $R$-module and let $x\in R_{1}$. We also
write $\mathit{\Gamma}_{R_{+}}(M)$ for the $R_{+}$-torsion submodule
of $M$ which we identify with $H^{0}_{R_{+}}(M)$. By NZD$_{R}(M)$
resp. ZD$_{R}(M)$ we denote the set of non-zerodivisors resp. of
zero divisors of $R$ with respect to $M$.
The linear form $x\in R_{1}$ is said to be ($R_{+}$-) \it{filter
regular with respect to }$M$  \rm{if} $x\in $NZD}$_{R}(M/\it{\Gamma}_{R_{+}}(M))$.     \hfill $\bullet$

\end{remi}

\begin{remi}\label{2.4}
{\rm (cf. [6, Definition 5.2]) For $d\in {\N}$ let $\D^d$ denote
the class of all pairs $(R, M)$ in which $R = \oplus_{n\in
{\N}_0}R_n$ is a Noetherian homogenous ring with Artinian base
ring $R_0$ and $M = \bigoplus_{n\in \mathbb{Z}}M_n$ is a finitely
generated graded $R$-module with $\dim(M)\leq d$.\hfill $\bullet
$}

\end{remi}

\vskip 1 cm
\section{Finiteness and Boundedness of Cohomology}

{\rm We keep the notations and hypotheses introduced in Section
2.}

\begin{defin}\label{6.1 Definition}

\rm{The {\it cohomology table} of the pair $(R, M)\in \D^d$ is the
family of non-negative integers}
\[ d_{M}:= (d^i_{M}(n))_{(i,n) \in{\mathbb N}_0 \times {\mathbb Z}}.\]\hfill $\bullet $

\end{defin}

\begin{remi}\label{6.2 Reminder}

\rm{(A) According to [5] the {\it cohomological pattern}
${\mathcal P}_{M}$ of the pair $(R, M) \in \D^d$ is defined as
the set of places at which the cohomology table of $(R, M)$ has a
non-zero entry:
\[{\mathcal P}_{M}:= \{ (i,n) \in {\N}_0 \times{\mathbb Z} \big\arrowvert d^i_{M}(n) \not= 0 \}.\]

\smallskip

(B) A set $P \subseteq {\mathbb N}_0 \times {\mathbb Z}$ is called a
{\it tame combinatorial pattern of width} $w \in {\mathbb N}_0$ if
the following conditions are satisfied:

\smallskip


\smallskip

\begin{enumerate}
\item[$(\pi _1)$]\hskip1cm$\exists m, n \in {\mathbb Z} : (0,m), (w,n) \in
P$;
\end{enumerate}

\smallskip

\begin{enumerate}
\item[$(\pi _2)$]\hskip1cm$(i,n) \in P \Rightarrow i \leq w$;
\end{enumerate}

\smallskip

\begin{enumerate}
\item[$(\pi _3)$]\hskip1cm$(i,n) \in P \Rightarrow \exists j \leq i : (j, n
+ i - j + 1) \in P$;
\end{enumerate}

\smallskip

\begin{enumerate}
\item[$(\pi _4)$]\hskip1cm$(i,n) \in P \Rightarrow \exists k \geq i : (k,
n + i - k - 1) \in P$;
\end{enumerate}

\smallskip

\begin{enumerate}
\item[$(\pi _5)$]\hskip1cm$i > 0 \Rightarrow \forall n \gg 0 : (i,n) \notin
P$;
\end{enumerate}

\smallskip

\begin{enumerate}
\item[$(\pi _6)$]\hskip1cm$\forall i \in {\mathbb N} : (\forall n \ll 0 ; (i,n)
\in P) \mbox{ \it or else } (\forall n \ll 0 : (i,n) \notin P)$.
\end{enumerate}

\smallskip

By [5] we know:}

\smallskip

\begin{enumerate}
\item[(a)] If $(R, M) \in\D^d$ with $\dim
(M) = s>0$, then ${\mathcal P}_{M}$ is a tame combinatorial pattern
of width $w= s-1.$
\end{enumerate}

\smallskip

\begin{enumerate}
\item[(b)] If $P$ is a tame combinatorial pattern of width $w \leq
d-1$, then there is a pair $(R, M) \in\D^d$ such that the base ring
$R_0$ is a field and $P = {\mathcal P}_{M}$.\hfill $\bullet $

\end{enumerate}

\smallskip

\end{remi}

\smallskip

{\rm By the previous observation, the set of patterns $\{
{\mathcal P}_{M} \big\arrowvert (R, M) \in \D^d \}$ is quite
large, and hence so is the set of cohomology tables $\{ d_{M}
\big\arrowvert (R, M) \in \D^d \} $. Therefore, one seeks for
decompositions $\bigcup _{i \in {\mathbb I}} {\mathcal C}_i =
\D^d$ of $\D^d$ into ``simpler'' subclasses ${\mathcal C}_i$ such
that for each $i \in {\mathbb I}$ the set $\{ d_{M}
\big\arrowvert (R, M) \in {\mathcal C}_i \} $ is finite. Bearing
in mind this goal, we define the following concepts:}

\medskip

\begin{defins}\label{6.3 Definitions}
\rm{(A) Let ${\mathcal C} \subseteq \D^d$ be a subclass. We say that
${\mathcal C}$ {\it is a subclass of finite cohomology} if
\[ \sharp \{ d_{M} \big\arrowvert (R, M) \in {\mathcal C} \} < \infty.\]

\smallskip

(B) We say that ${\mathcal C} \subseteq \D^d$ is a} {\it subclass of
bounded cohomology} if
\[ \forall (i, n) \in {\mathbb N}_0 \times {\mathbb Z} : \sharp\{ d^i _{M} (n) \big\arrowvert (R, M) \in
{\mathcal C} \} < \infty.\]  \hfill $\bullet $

\end{defins}

\begin{rem}\label{6.4Remark}

\rm{(A) Let ${\mathcal C}, {\mathcal D} \subseteq \D^d$ be
subclasses of $\D^d$. Then clearly}

\smallskip

\renewcommand{\descriptionlabel}[1]%
             {\hspace{\labelsep}\textrm{#1}}
\begin{description}
\setlength{\labelwidth}{13mm} \setlength{\labelsep}{1.3mm}
\setlength{\itemindent}{0mm}

\item[(a)] \rm{If ${\mathcal C} \subseteq {\mathcal D}$ and ${\mathcal D}$
is of finite cohomology or of bounded cohomology, then so is ${\mathcal C}$ respectively.}
\end{description}

\smallskip

\noindent (B) \rm{If $r\in \Z$, we have a bijection}

\smallskip

\renewcommand{\descriptionlabel}[1]%
             {\hspace{\labelsep}\textrm{#1}}
\begin{description}
\setlength{\labelwidth}{13mm} \setlength{\labelsep}{1.3mm}
\setlength{\itemindent}{0mm}

\item[ ] $ \{ d_{M} \big\arrowvert (R, M) \in
{\mathcal C} \} \ \twoheadrightarrow\{ d_{M(r)} \big\arrowvert
(R, M) \in {\mathcal C}\} $ \rm{given by}  $d_{M} \mapsto
d_{M(r)}$.
\end{description}\hfill $\bullet $
 \end{rem}
\smallskip

{\rm Now, we show how the finiteness and boundedness conditions defined
above are related.}

\begin{thm}\label{6.5 Theorem}

For a subclass ${\mathcal C} \subseteq \D^d$ the following
statements are equivalent:

\smallskip

\renewcommand{\descriptionlabel}[1]%
             {\hspace{\labelsep}\textrm{#1}}
\begin{description}
\setlength{\labelwidth}{13mm} \setlength{\labelsep}{1.3mm}
\setlength{\itemindent}{0mm}

\item[{\rm (i)}] ${\mathcal C}$ is a class of finite cohomology.

\item[{\rm (ii)}] ${\mathcal C}$ is a class of bounded cohomology.

\item[{\rm (iii)}] For each $n_0 \in {\mathbb Z}$ the set $\triangle_{\C, n_0}:= \{d^i_{M}(n_0-i)\mid \ (R, M)\in \C, \ 0\leq i< d\}$ is finite.

\item[{\rm (iv)}] There is some $n_0 \in {\mathbb Z}$ such that the set $\triangle_{\C, n_0}$ of statement (iii) is finite.
\end{description}
\end{thm}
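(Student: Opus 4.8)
The plan is to establish the cycle of implications (i) $\Rightarrow$ (ii) $\Rightarrow$ (iii) $\Rightarrow$ (iv) $\Rightarrow$ (i), where the first three implications are essentially formal and the last is the substantive one. The implication (i) $\Rightarrow$ (ii) is immediate: if only finitely many tables $d_M$ occur, then for each fixed $(i,n)$ only finitely many values $d^i_M(n)$ can occur. The implication (ii) $\Rightarrow$ (iii) is equally trivial: the set $\triangle_{\C,n_0}$ is a finite union (over $0 \leq i < d$) of the sets $\{d^i_M(n_0-i) \mid (R,M) \in \C\}$, each of which is finite by hypothesis (ii). And (iii) $\Rightarrow$ (iv) is a logical weakening. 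So the whole content of the theorem lies in proving (iv) $\Rightarrow$ (i): bounding the cohomology along a \emph{single} diagonal $\{(i, n_0 - i) \mid 0 \leq i < d\}$ forces the entire cohomology table to take only finitely many values.

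To prove (iv) $\Rightarrow$ (i), I would proceed by induction on $d$. The key idea is to use a generic hyperplane section (a filter-regular linear form $x \in R_1$) to reduce the dimension by one. For $(R,M) \in \C$ one passes to $\overline M := M/xM$ (or $M/\mathit{\Gamma}_{R_+}(M)$ first, then mod $x$), which lies in $\D^{d-1}$; the exact sequences and isomorphisms of Reminder 2.2, together with the long exact cohomology sequence associated to $0 \to M/\mathit{\Gamma}_{R_+}(M) \xrightarrow{x} (M/\mathit{\Gamma}_{R_+}(M))(1) \to \overline M(1) \to 0$, relate $h^i_M$ and $h^i_{\overline M}$ (hence $d^i_M$ and $d^i_{\overline M}$) degree by degree. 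The diagonal bound on $M$ at $n_0$ should translate, via these sequences, into a diagonal bound on $\overline M$ at $n_0$ (or at $n_0$ shifted by a controlled amount), so that the inductive hypothesis applies to give finiteness of the table $d_{\overline M}$. One then has to climb back up: knowing $d_{\overline M}$ in finitely many possibilities, plus the single diagonal datum on $M$, one reconstructs $d_M$ in finitely many possibilities. The base case $d = 1$ should be handled directly — here the diagonal is the single place $(0, n_0)$, and one must show that bounding $d^0_M(n_0)$ for modules of dimension $\leq 1$ bounds the whole table $d^0_M$ (note $d^i_M = 0$ for $i \geq 1$ when $\dim M \leq 1$, using $D^i_{R_+}(M) \cong H^{i+1}_{R_+}(M)$ and vanishing above the dimension).

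The main obstacle I anticipate is the bookkeeping in the reduction step: controlling how the cohomology of $M$ in \emph{all} degrees is pinned down by the cohomology of $\overline M$ together with the single diagonal value, since the connecting maps in the long exact sequence involve multiplication by $x$ and the failure of exactness is governed by $\mathit{\Gamma}_{R_+}(M)$, whose length in each degree is itself one of the $d^0$-type data one is trying to bound. In particular one needs an a priori bound — independent of the pair — on quantities like $\mathrm{beg}$, $\mathrm{end}$, or regularity-type invariants, so that the "finitely many" conclusions are uniform across the class $\C$; this is exactly the kind of uniform finiteness that the diagonal hypothesis must be shown to supply. A clean way to organize this may be to first prove that condition (iv) implies a uniform bound on $\reg(M)$ (equivalently on all $a_i(M)$) and on $d^0_M(n)$ for all $n$ in some explicitly bounded range, and then observe that such bounds, by the tameness and finite-generation properties recorded in Reminders 2.2–2.3, determine the full table.
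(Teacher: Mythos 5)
Your treatment of the easy implications (i)\,$\Rightarrow$\,(ii)\,$\Rightarrow$\,(iii)\,$\Rightarrow$\,(iv) agrees with the paper, which dismisses them as immediate. But the paper's entire proof of (iv)\,$\Rightarrow$\,(i) consists of a shift (Remark 3.4(B)) reducing to the normalized diagonal $d^i_{M(n_0)}(-i)\leq h$, followed by an appeal to the external result [6, Theorem 5.4], which asserts that such a diagonal bound forces the set of all cohomological Hilbert functions $d^i_{M(n_0)}$ to be finite. Your proposal instead tries to reprove that result from scratch by induction on $d$ via a filter-regular linear form, and this is where a genuine gap remains. The half of the induction you describe accurately --- translating the diagonal bound on $M$ into one on $\overline M=M/xM$ and propagating bounds to degrees on or to the right of the diagonal via the sequences $D^i_{R_+}(M)_{m-1}\to D^i_{R_+}(M)_m\to D^i_{R_+}(M/xM)_m\to D^{i+1}_{R_+}(M)_{m-1}$ --- is essentially Lemma 4.1 of the paper and is fine. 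What is not supplied is the ``climb back up'': for fixed $(i,n)$ with $n$ to the \emph{left} of the diagonal the long exact sequences only yield inequalities in the increasing-$n$ direction, so finiteness of $d_{\overline M}$ plus the single diagonal datum does not, by any bookkeeping you indicate, pin down $d^i_M(n)$ there; controlling that left-hand region (equivalently, the cohomological postulation behaviour / regularity of the deficiency modules) is precisely the substance of [6, Theorem 5.4].

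Your suggested repair is also not available as stated: condition (iv) cannot imply a uniform bound on $\reg(M)$. For instance the pairs $\bigl(R,(R/R_+)(-k)\bigr)$, $k\in\N$, have $D^i_{R_+}\bigl((R/R_+)(-k)\bigr)=0$ for all $i$, hence identically zero tables $d_M$ and satisfy every diagonal bound, while $\reg\bigl((R/R_+)(-k)\bigr)=k$ is unbounded; the table $d_M$ is blind to $H^0_{R_+}(M)$ and sees $H^1_{R_+}(M)$ only through $D^0_{R_+}(M)$. At best one can hope to bound invariants at and above level $2$ (such as $\reg^2$, or the regularity of the modules $H^i_{R_+}(M)$ for $i\geq 1$), and even then a bound on $\reg^2(M)$ alone does not determine finitely many tables without simultaneously bounding the relevant component lengths --- again exactly the content of the cited theorem. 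So the proposal is a reasonable outline of how such a result could be attacked, but the decisive step is asserted rather than proved, and the concrete fallback it offers is false.
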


\begin{proof}
The implications $(\rm{i}) \Rightarrow (\rm{ii}) \Rightarrow
(\rm{iii}) \Rightarrow (\rm{iv})$ are clear from the definitions.
To prove the implication $(\rm{iv}) \Rightarrow (\rm{i})$ fix $n_0
\in {\mathbb Z}$ and assume that the set $\triangle_{\C, n_0}$ is
finite. Then there is some non-negative integer $h$ such that
$d^i_{M(n_0)}(-i) \leq h$ for all pairs $(R, M) \in {\mathcal C} $
and all $i \in \{ 0, \cdots , d-1 \} $. By [6, Theorem 5.4] it
thus follows that the set of functions $$\{d^i_{M(n_0)}\mid \ (R, M)\in \C, i\in \N_0\}$$
 is  finite. By Remark 3.4 (B) we now may conclude
that the class ${\mathcal C}$ is of finite cohomology.
 \end{proof}

\sm

{\rm So, by Theorem 3.5 boundedness and finiteness of cohomology are the same for a given class $\C\subseteq {\D}^d$.}

\begin{defin}\label{6.6 Definition}

\rm{Let $d \in {\mathbb N}_0$, let ${\mathcal C} \subseteq
{\mathcal D}^d$ and let ${\mathbb S} \subseteq \{ 0, \cdots,
d-1\} \times {\mathbb Z}$ be a subset. We say that the set}
${\mathbb S}$ {\it bounds cohomology in ${\mathcal C}$} if for
each family $(h^\sigma ) _{\sigma \in {\mathbb S}}$ of non
negative integers $h^\sigma $ the class
\[ \{ (R, M) \in {\mathcal C}\mid \ \forall
(i,n) \in {\mathbb S} : d^i_{M}(n) \leq h^{(i,n)}\}\] is of finite cohomology. \hfill $\bullet $
\end{defin}

\begin{rem}\label{6.7 Remark}

\rm{(A) Let $d \in {\mathbb N}_0$, let ${\mathcal C}, {\mathcal D}
\subseteq \D^d$ and ${\mathbb S}, {\mathbb T} \subseteq \{ 0,
\cdots , d-1 \} \times {\mathbb Z}$. Then obviously we can say}

\smallskip

\renewcommand{\descriptionlabel}[1]%
             {\hspace{\labelsep}\textrm{#1}}
\begin{description}
\setlength{\labelwidth}{13mm} \setlength{\labelsep}{1.3mm}
\setlength{\itemindent}{0mm}

\item[ ] {\rm If ${\mathbb S} \subseteq {\mathbb T}$ and ${\mathbb S}$
bounds cohomology in ${\mathcal C}$, then so does ${\mathbb T}$.}
\end{description}

\smallskip

(B) \rm{If $r \in {\mathbb Z}$, we can form the
set }
$ {\mathbb S}(r):= \{ (i,n + r) \big\arrowvert (i,n) \in
      {\mathbb S} \}.$
\rm{In view of the bijection of Remark 3.4 (B) we have}

\smallskip

\renewcommand{\descriptionlabel}[1]%
             {\hspace{\labelsep}\textrm{#1}}
\begin{description}
\setlength{\labelwidth}{13mm} \setlength{\labelsep}{1.3mm}
\setlength{\itemindent}{0mm}

\item[ ] {\rm ${\mathbb S}(r)$ bounds cohomology in ${\mathcal C}(r):= \{(R, M(r))\mid \ (R, M)\in \C\}$
if and only if ${\mathbb S}$ does in $\C$.}
\end{description}

\smallskip

\rm{(C)
For all $s \in \{ 0, \cdots , d \} $ we set
\[ {\mathbb S}^{< s} := {\mathbb S} \cap (\{ 0, \cdots , s-1)\times {\mathbb Z}).\]
as $\D^s \subseteq \D^d$ it follows easily:}

\smallskip

\renewcommand{\descriptionlabel}[1]%
             {\hspace{\labelsep}\textrm{#1}}
\begin{description}
\setlength{\labelwidth}{13mm} \setlength{\labelsep}{1.3mm}
\setlength{\itemindent}{0mm}

\item[ ] {\rm If ${\mathbb S}$ bounds cohomology in $\C$,
then ${\mathbb S}^{< s}$ bounds cohomology in} $\D^s \cap
\C$.
\end{description}
  \hfill $ \bullet $
\end{rem}

\begin{cor}\label{6.8 Corollary}

Let ${\mathcal C} \subseteq \D^d$ and $n \in {\mathbb Z}$. Then, the
"$n$-th diagonal"
\[ \{ (i, n - i )\big\arrowvert i = 0, \cdots , d-1 \}\]
bounds cohomology in ${\mathcal C}$.

\end{cor}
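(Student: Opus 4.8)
The plan is to reduce the assertion to Theorem 3.5, specifically to the equivalence of statements (i) and (iv) in that theorem. Fix a subclass $\mathcal{C} \subseteq \D^d$ and an integer $n$. Set $\MS := \{(i, n-i) \mid i = 0, \cdots, d-1\}$, and let $(h^\sigma)_{\sigma \in \MS}$ be an arbitrary family of non-negative integers; write $h_i := h^{(i, n-i)}$. I must show that the subclass
\[
\mathcal{C}' := \{(R, M) \in \mathcal{C} \mid d^i_M(n - i) \leq h_i \text{ for all } i = 0, \cdots, d-1\}
\]
is of finite cohomology.

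First I would observe that $\mathcal{C}'$ is itself a subclass of $\D^d$, so Theorem 3.5 applies to it. By the definition of $\mathcal{C}'$, for every $(R, M) \in \mathcal{C}'$ and every $i \in \{0, \cdots, d-1\}$ we have $d^i_M(n-i) \leq h_i$. Setting $h := \max\{h_0, \cdots, h_{d-1}\}$, this gives $d^i_M(n - i) \leq h$ for all such pairs and all $i$. In the notation of Theorem 3.5, this says exactly that the set $\triangle_{\mathcal{C}', n} = \{d^i_M(n - i) \mid (R, M) \in \mathcal{C}', \ 0 \leq i < d\}$ is contained in $\{0, 1, \cdots, h\}$, hence is finite.

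Thus statement (iv) of Theorem 3.5 holds for the subclass $\mathcal{C}'$ (with $n_0 := n$), and therefore statement (i) holds as well: $\mathcal{C}'$ is of finite cohomology. Since $(h^\sigma)_{\sigma \in \MS}$ was arbitrary, the set $\MS$ bounds cohomology in $\mathcal{C}$ by Definition 3.6, which is what was to be shown. There is essentially no obstacle here — the statement is a direct packaging of Theorem 3.5 into the language of Definition 3.6; the only thing to be careful about is that the ``diagonal'' set $\MS$ is indexed precisely so that the constraint $d^i_M(n-i) \leq h^{(i,n)}$ matches the set $\triangle_{\mathcal{C}, n}$ appearing in Theorem 3.5(iii)–(iv), and that one may pass from the finitely many bounds $h_0, \ldots, h_{d-1}$ to a single uniform bound $h$.
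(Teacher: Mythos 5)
Your argument is correct and is exactly the intended one: the paper's proof is simply "immediate by Theorem 3.5," and your write-up spells out that reduction — bounding the diagonal set $\triangle_{\mathcal{C}',n}$ by $\max\{h_0,\ldots,h_{d-1}\}$ and invoking the implication (iv) $\Rightarrow$ (i) of Theorem 3.5. No gaps; this matches the paper's approach.
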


\begin{proof} This is immediate by Theorem 3.5.
\end{proof}

\vskip 1 cm

\section{Quasi-Diagonals}

{\rm Our first aim is to generalize Corollary 3.8 by showing that not
only the diagonals bound cohomology on ${\mathcal C}$, but rather
all ``quasi-diagonals''. We shall define below, what such a
quasi-diagonal is.}

\smallskip

\begin{lem}\label{7.1 Lemma}

Let $t \in \{ 1, \cdots , d \} $, let $(n_i)^{d-1}_{i = d - t}$ be a
sequence of integers such that $n_{d - 1} < \ldots < n_{d - t}$ and
let ${\mathcal C} \subseteq \D^d$ be a class such that the set $\{ d^i_M(n_i) \big\arrowvert \ (R, M)\in \C \}$ is finite for all
$i\in \{d-t, \cdots d-1\}$.
 Then the set $\{ d^i_M(n) \big\arrowvert \ (R, M)\in \C \}$ is finite whenever
$n_i \leq n$ and $d - t \leq i \leq d-1$.
\end{lem}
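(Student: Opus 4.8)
The plan is to fix $i$ with $d-t \le i \le d-1$ and an integer $n \ge n_i$, and to deduce finiteness of $\{d^i_M(n) \mid (R,M) \in \mathcal{C}\}$ from the hypothesis that $\{d^i_M(n_i) \mid (R,M) \in \mathcal{C}\}$ is finite. The key mechanism is that moving from the graded component in degree $n_i$ to the component in a higher degree $n$ should only cost us a controlled, uniformly bounded amount, because of the exact sequences and vanishing facts for $D^i_{R_+}$ recalled in Reminder~\ref{2.2} and Reminder~\ref{2.3}. Since by hypothesis $d^i_M(n_i)$ takes only finitely many values over $\mathcal{C}$, it suffices to bound $d^i_M(n) - (\text{something controlled by } d^i_M(n_i))$, or more precisely to show $d^i_M(n)$ itself is bounded in terms of $d^i_M(n_i)$ and the fixed data $n, n_i, d, i$.

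The concrete approach I would take is reduction by a generic hyperplane section. For $i \ge 1$ we have $D^i_{R_+}(M) \cong H^{i+1}_{R_+}(M)$, so we are really controlling $h^{i+1}_M$. After passing to $M/\Gamma_{R_+}(M)$ (which changes only $H^0$ and $H^1$, hence is harmless for the levels $i \ge d-t \ge 1$ we care about when $t < d$; the case $i = 0$, i.e. $t = d$, needs separate but easy treatment via the exact sequence (i) of Reminder~\ref{2.2}(B)), we may choose a filter-regular element $x \in R_1$ and use the long exact cohomology sequence associated to $0 \to M/\Gamma_{R_+}(M) \xrightarrow{x} (M/\Gamma_{R_+}(M))(1) \to \bar{M} \to 0$. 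This yields, in each degree, inequalities relating $h^j_M(n)$ and $h^j_M(n-1)$ to cohomology of $\bar M$, which has dimension one less; iterating and inducting on $d$ lets one propagate a bound from degree $n_i$ up to degree $n$ in $n - n_i$ steps, each step multiplying the bound by a factor depending only on the dimension and on the lengths appearing. One must be slightly careful that filter-regular elements exist, which is standard when $R_0$ is Artinian (infinite residue field can be arranged by the usual faithfully flat base change $R_0 \to R_0[T]_{\mathfrak{m}R_0[T]}$, which does not affect any of the lengths $d^i_M(n)$).

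The main obstacle I anticipate is making the induction uniform over the whole class $\mathcal{C}$: the dimension-reduction step produces a new pair $(R, \bar M)$ whose relevant cohomology lengths in the pivot degree $n_i$ must themselves be known to be finite over the class, so one needs the long exact sequence to bound $h^j_{\bar M}(n_i)$ (for the appropriate $j$) in terms of $h^i_M(n_i)$ and $h^i_M(n_i - 1)$ — and then one is forced to also control degree $n_i - 1$, which is why the hypothesis is phrased with a strictly decreasing sequence $n_{d-1} < \cdots < n_{d-t}$: the value $n_{i+1} < n_i$ (or a lower bound governed by it) supplies exactly the starting data needed at the next level down. So the real content is to organize a downward induction on $t$ (equivalently on the codimension of the slice) in which, at each stage, the gap $n_i - n_{i+1}$ absorbs the finitely many degrees one must track, and to check that every length that enters the bookkeeping is, by the inductive hypothesis or by Theorem~\ref{6.5 Theorem}/Corollary~\ref{6.8 Corollary} applied to $\bar M$, already known to range over a finite set. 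Once that bookkeeping is set up the individual inequalities are routine applications of the long exact sequence together with the vanishing $h^i_M(n) = 0$ for $i > \dim M$ and for $n \gg 0$.
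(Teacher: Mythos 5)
Your outline is essentially the paper's proof: after the standard reduction to an Artinian local base ring with infinite residue field one replaces $M$ by $M/\Gamma_{R_+}(M)$ (harmless for every $d^i_M$, so no separate case $i=0$ is needed), picks $x\in R_1\cap \mathrm{NZD}(M)$, and runs the long exact sequences coming from $0\to M(-1)\to M\to M/xM\to 0$ in an induction on $t$, where the case $t=1$ holds because $\dim(M/xM)<d$ forces $D^{d-1}_{R_+}(M/xM)=0$, so $d^{d-1}_M$ is non-increasing, and the inductive step bounds $d^{d-t}_M(n)$ by the telescoping (additive, not multiplicative) estimate $d^{d-t}_M(n)\leq d^{d-t}_M(n_{d-t})+\sum_{n_{d-t}<m\leq n}d^{d-t}_{M/xM}(m)$, the summands being uniformly finite by the induction hypothesis applied to the class of hyperplane sections. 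The one correction to your bookkeeping: the pivot values of the section are controlled by $d^i_{M/xM}(n_i)\leq d^i_M(n_i)+d^{i+1}_M(n_i-1)$, whose second term sits at cohomological level $i+1$, not $i$ (a level-$i$ value below its own pivot $n_i$ could never be bounded), and it is finite over $\mathcal{C}$ precisely because $n_i-1\geq n_{i+1}$ and finiteness at level $i+1$ is already known in all degrees $\geq n_{i+1}$ — this is exactly how the strict decrease of the $n_i$ enters, as you anticipated.
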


\begin{proof}
By our hypothesis there is some $h \in {\mathbb N}_0$ with
$d^i_{M}(n_i) \leq h$ for all $i \in \{ d - t, \cdots , d-1 \} $ and
all pairs $(R, M) \in {\mathcal C}$.

On use of standard reduction arguments
we can restrict ourselves to the case where the Artinian base ring $R_0$
is local with infinite residue field. Let $(R, M)\in {\mathcal
C}$. Replacing $M$ by $M/\Gamma_{R_+}(M)$ we may assume that $M$ is
$R_+$-torsion free. Therefore, there exists $x\in R_1\cap
\mbox{NZD}(M)$. For each $i\in {\N}_0$ and $m\in \Z$, the short
exact sequence $0\longrightarrow M(-1)\longrightarrow
M\longrightarrow M/xM\longrightarrow 0$ induces long exact sequences

\[ D^i_{R_+}(M)_{m - 1} \rightarrow D^i_{R_+}(M)_{m}
\rightarrow D^i_{R_+}(M/xM)_{m}\longrightarrow D^{i+1}_{R_+}(M)_{m
-1}. \leqno{(\ast _{i, m})}\]

As $\dim(M/xM)< d$, the sequences
$(*_{d-1, m})$ imply that $d^{d-1}_M(m)\leq d^{d-1}_M(m-1)$ for
all $m\in \Z$. This proves our claim if $t=1$. So, let $t>1$.

Assume inductively that the set $\{ d^i_M(n_i) \big\arrowvert \
(R, M)\in \C \}$ is finite whenever $n_i \leq n$ and  $d - t + 1
\leq i \leq d -1$. It remains to find a family of non-negative
integers $(h_n)_{n \geq n_{d - t}}$ such that $d^{d - t} _{M}(n)
\leq h_n$ for all $n \geq n_{d - t}$. Let $\mathcal{E}$ denote the class
of all pairs $(R, M/xM)=(R, \overline{M})$ in which $(R, M)\in{\mathcal C}$ and
$x\in R_1\cap \mbox{NZD}(M)$. As $n_i - 1 \geq n_{i + 1}$ for all
$i \in \{ d - t, \cdots , d-2\} $, the sequences $(\ast
_{i,n_i})$ show that $$d^i_{M/xM} (n_i) \leq d^{i + 1}_M(n_i - 1)
+ h  \ \mbox{for} \ i \in \{ d - t, \cdots , d - 2 \}.$$ This
means that the set $\{ d^i_{\overline{M}}(n_i) \big\arrowvert \ (R,
\overline{M})\in \mathcal{E} \}$ is finite whenever
 $(d-1) - (t - 1) \leq i \leq d -2$.
So, by induction  the set $\{ d^i_{\overline{M}}(n_i) \big\arrowvert \ (R, \overline{M})\in \mathcal{E} \}$
is finite whenever
$n_i \leq n$ and  $(d-1) - (t - 1) \leq i \leq d -2$.

In particular there is a family of non-negative integers $(k_m)_{m
\geq n _{d - t}}$ such that $d^{d - t}_{M/xM}(m) \leq k_m$ for
all $m \geq n_{d - t}$. Now, for each $n \geq n_{d - t}$ set
$h_n:= h + \Sigma _{n_{d - t} < m \leq n} k_m$. If we choose $(R,
M) \in {\mathcal C}$, the sequences $(\ast _{d - t, n})$ imply
that $d^{d-t}_{M}(n) \leq h_n$ for all $n \geq n_{d - t}$.
\end{proof}

\begin{prop}\label{7.2 Proposition}

Let $(n_i)^{d-1}_{i = 0}$ be a sequence of integers such that $
n_{d - 1} < \ldots < n_0$ and let $\C\subseteq {\D}^d$. Then the set $\{ (i, n_i)
\big\arrowvert i = 0, \cdots ,d-1\} $ bounds cohomology in $\C$.
\end{prop}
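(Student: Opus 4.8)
The plan is to deduce Proposition 4.2 from Lemma 4.1 essentially by a reduction to the case treated there, namely by choosing a suitable shift and combining the lemma with Theorem 3.5. The key observation is that a quasi-diagonal $\{(i,n_i)\mid i=0,\ldots,d-1\}$ with $n_{d-1}<\cdots<n_0$ is, for the purposes of bounding cohomology, no worse than an actual diagonal: after we bound the values $d^i_M(n_i)$ for all $i$, we can invoke Lemma 4.1 with $t=d$ (so $i$ ranges over $\{0,\ldots,d-1\}$) to conclude that $\{d^i_M(n)\mid (R,M)\in\mathcal{C}\}$ is finite whenever $n_i\le n$. In particular, pick any integer $n_0'\ge n_0$; then for every $i\in\{0,\ldots,d-1\}$ we have $n_i\le n_0\le n_0'\le n_0'-i+i$, so in fact $n_i\le n_0'$, and Lemma 4.1 gives the finiteness of $\{d^i_M(n)\mid (R,M)\in\mathcal{C}\}$ for each individual $n\ge n_i$. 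The mild subtlety is that I actually want finiteness along a genuine diagonal $\{d^i_M(n_0'-i)\mid 0\le i<d\}$, so I should choose $n_0'$ large enough that $n_0'-i\ge n_i$ for all $i$; since $n_i$ is a fixed finite decreasing sequence, $n_0':=n_0+d$ works (as $n_0'-i=n_0+d-i\ge n_0+1\ge n_0\ge n_i$).

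Concretely, here is the order of steps I would carry out. First, let $(h^\sigma)_{\sigma\in\mathbb{S}}$ be an arbitrary family of non-negative integers indexed by $\mathbb{S}:=\{(i,n_i)\mid i=0,\ldots,d-1\}$, and set $\mathcal{C}':=\{(R,M)\in\mathcal{C}\mid d^i_M(n_i)\le h^{(i,n_i)}\text{ for }i=0,\ldots,d-1\}$; the goal is to show $\mathcal{C}'$ is of finite cohomology. Second, observe that for each fixed $i$ the set $\{d^i_M(n_i)\mid (R,M)\in\mathcal{C}'\}$ is contained in $\{0,1,\ldots,h^{(i,n_i)}\}$, hence finite. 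Third, apply Lemma 4.1 to the class $\mathcal{C}'$ with $t=d$ (legitimate since $n_{d-1}<\cdots<n_0$ is exactly the hypothesis $n_{d-1}<\cdots<n_{d-t}$ when $t=d$): we obtain that $\{d^i_M(n)\mid (R,M)\in\mathcal{C}'\}$ is finite whenever $n_i\le n$ and $0\le i\le d-1$. Fourth, put $n_0':=n_0+d$; then $n_0'-i\ge n_0\ge n_i$ for every $i\in\{0,\ldots,d-1\}$, so each of the $d$ sets $\{d^i_M(n_0'-i)\mid (R,M)\in\mathcal{C}'\}$ is finite, and therefore the diagonal set $\triangle_{\mathcal{C}',n_0'}=\{d^i_M(n_0'-i)\mid (R,M)\in\mathcal{C}',\ 0\le i<d\}$ is finite (a finite union of finite sets). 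Fifth, invoke Theorem 3.5, implication (iv)$\Rightarrow$(i), to conclude that $\mathcal{C}'$ is of finite cohomology. Since the family $(h^\sigma)$ was arbitrary, $\mathbb{S}$ bounds cohomology in $\mathcal{C}$.

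I do not expect a genuine obstacle here, since all the real work is done in Lemma 4.1 and Theorem 3.5; the only thing to be careful about is bookkeeping with the indices — making sure the hypothesis "$n_i\le n$ and $d-t\le i\le d-1$" of Lemma 4.1 with $t=d$ reads "$n_i\le n$ and $0\le i\le d-1$", and that the chosen shift $n_0'$ simultaneously satisfies $n_0'-i\ge n_i$ for all $i$ in the decreasing sequence. One could equivalently phrase the argument via Remark 4.1(B) (the shift $\mathbb{S}\mapsto\mathbb{S}(r)$), but the direct route through Lemma 4.1 and Theorem 3.5 is cleanest and avoids introducing notation that is not needed. The statement is thus a short corollary of the lemma just proved.
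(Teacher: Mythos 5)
Your proposal is correct and follows essentially the same route as the paper: fix the bounds, form the subclass $\mathcal{C}'$, apply Lemma 4.1 with $t=d$ to get finiteness of $\{d^i_M(n)\mid (R,M)\in\mathcal{C}'\}$ for $n\ge n_i$, deduce finiteness of a diagonal set, and conclude by Theorem 3.5. The only (immaterial) difference is that the paper uses the diagonal at $n_0$ directly, since the strict decrease $n_{d-1}<\cdots<n_0$ already forces $n_0-i\ge n_i$, whereas you shift to $n_0+d$ for safety.
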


\begin{proof} Let $(h^i)_{i=0}^{d-1}$ be a family of non-negative integers  and let $\C'$
 be the class of all pairs $(R, M)\in \C$ such that $d^i_M(n_i)\leq h^i$
 for $ i = 0, \cdots , d-1$. Then, by Lemma
4.1
 the set $\{d^i_M(n)\mid \ (R, M)\in \C'\}$ is finite, whenever $n\geq n_i$ and $0\leq i\leq d-1$.
Therefore the  set $\triangle_{\C', n_0}:=\{d^i_M(n_0-i)\mid \
(R, M)\in \C', \ 0\leq i< d\}$  is finite. So, by Theorem 3.5 the
class $\C'$ is of finite cohomology. It follows that $\{ (i,n_i)
\big\arrowvert i = 0, \cdots , d-1\} $ bounds cohomology in
${\mathcal C}$.
\end{proof}

\begin{defin}\label{7.3 Definition}

\rm{A set ${\mathbb T} \subseteq \{ 0, 1, \cdots , d-1 \} \times
{\mathbb Z}$ is called a {\it quasi-diagonal} if there is a sequence
of integers $(n_i)^{d-1}_{i=0}$ such that $n_{d-1} < n_{d-2}<\ldots
< n_0$ and}
\[ {\mathbb T} = \{ (i, n_i) \big\arrowvert i = 0, \cdots , d-1\}.\]   \hfill $\bullet$

\end{defin}

\smallskip

{\rm Observe, that diagonals in $\{ 0, \cdots , d-1\} \times {\mathbb Z}$
are quasi-diagonals. So, the next result generalizes Corollary  3.8.}

\smallskip

\begin{cor} \label{7.4 Corollary}

Let ${\mathbb S} \subseteq \{ 0,1,\cdots , d\} \times {\mathbb Z}$
be a set which contains a quasi-diagonal. Then ${\mathbb S}$
bounds cohomology in each subclass $\C\subseteq \D^d$.
\end{cor}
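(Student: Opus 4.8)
The plan is to reduce Corollary 4.4 immediately to Proposition 4.2 via the monotonicity statement in Remark 3.7 (A). First I would unwind the definition of ``contains a quasi-diagonal'': by Definition 4.3 there is a sequence of integers $n_0 > n_1 > \cdots > n_{d-1}$ with $\mathbb{T} := \{(i,n_i)\mid i=0,\cdots,d-1\} \subseteq \mathbb{S}$. Fix an arbitrary subclass $\mathcal{C}\subseteq \D^d$. Proposition 4.2 (applied with this $\mathcal{C}$) says precisely that the quasi-diagonal $\mathbb{T}$ bounds cohomology in $\mathcal{C}$.

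Next I would invoke Remark 3.7 (A), which asserts that if $\mathbb{S}\subseteq\mathbb{T}$ and $\mathbb{S}$ bounds cohomology in $\mathcal{C}$, then so does $\mathbb{T}$ — here I must be careful to apply it in the correct direction: we have $\mathbb{T}\subseteq\mathbb{S}$ and $\mathbb{T}$ bounds cohomology, so the larger set $\mathbb{S}$ bounds cohomology in $\mathcal{C}$. (Concretely: given any family $(h^\sigma)_{\sigma\in\mathbb{S}}$ of non-negative integers, its restriction $(h^\sigma)_{\sigma\in\mathbb{T}}$ imposes the constraints $d^i_M(n_i)\le h^{(i,n_i)}$, so the class cut out by the $\mathbb{S}$-constraints is contained in the class cut out by the $\mathbb{T}$-constraints, and the latter is of finite cohomology by Proposition 4.2; then Remark 3.6 (A)(a) gives that the subclass is of finite cohomology too.) Since $\mathcal{C}$ was arbitrary, $\mathbb{S}$ bounds cohomology in every subclass $\mathcal{C}\subseteq\D^d$.

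There is essentially no obstacle here: the corollary is a formal consequence packaging Proposition 4.2 with the trivial monotonicity of the ``bounds cohomology'' relation under enlarging the index set. The only minor point worth stating explicitly is that Proposition 4.2 is quantified over all classes $\mathcal{C}\subseteq\D^d$, so no separate argument is needed to handle arbitrary $\mathcal{C}$; and that the extra indices in $\mathbb{S}\setminus\mathbb{T}$ only add further restrictions, hence can only shrink the cut-out class, which preserves finiteness of cohomology by Remark 3.6 (A)(a). I would therefore write the proof in two or three sentences.

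\begin{proof}
By Definition 4.3 there is a sequence of integers $n_0 > n_1 > \cdots > n_{d-1}$ such that the quasi-diagonal $\mathbb{T} := \{(i,n_i)\mid i=0,\cdots,d-1\}$ is contained in $\mathbb{S}$. Let $\C\subseteq\D^d$ be an arbitrary subclass. By Proposition 4.2 the set $\mathbb{T}$ bounds cohomology in $\C$. Since $\mathbb{T}\subseteq\mathbb{S}$, it follows from Remark 3.7 (A) that $\mathbb{S}$ bounds cohomology in $\C$ as well. As $\C$ was arbitrary, we are done.
\end{proof}
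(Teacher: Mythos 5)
Your proof is correct and is exactly the argument the paper intends: the paper disposes of this corollary with ``Clear by Proposition 4.2,'' which implicitly uses the same monotonicity of Remark 3.7 (A) that you spell out. Your careful note about applying the monotonicity in the right direction (the quasi-diagonal $\mathbb{T}\subseteq\mathbb{S}$ bounds cohomology, hence so does the larger set $\mathbb{S}$) is just an explicit version of what the paper leaves to the reader.
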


\begin{proof}
Clear by Proposition 4.2.
\end{proof}

\smallskip

{\rm Our next goal is to show that the converse of Corollary 4.4
 holds, namely: if a set ${\mathbb S} \subseteq \{ 0,1, \cdots , d-1
\} \times {\mathbb Z}$ bounds cohomology in $\D^d$, then ${\mathbb
S}$ contains a quasi-diagonal.}

\smallskip

\begin{remi}\label{4.5 Reminder}
{\rm Let $K$ be a field, let $R= K\oplus R_1\oplus \cdots$ and
$R^{'}= K\oplus R^{'}_1\oplus \cdots$ be two Noetherian
homogeneous $K$-algebras. Let $R\boxtimes _K R^{'}:= K\oplus
(R_1\otimes R^{'}_1)\oplus (R_2\otimes R^{'}_2)\oplus \cdots
\subseteq R\otimes_K R^{'}$
 be the {\it Segre product ring of $R$ and $R^{'}$}, a Noetherian homogeneous $K$-algebra. For a graded $R$-module $M=\bigoplus_{n\in \Z}M_n$
 and a graded $R^{'}$-module $M^{'}=\bigoplus_{n\in \Z}M^{'}_n$ let
 $M\boxtimes _K M^{'}:=\bigoplus _{n\in \Z}M_n\otimes _K M^{'}_n\subseteq M\otimes _K M^{'}$
 the Segre product module of $M$ and $M^{'}$, a graded $R\boxtimes _K R^{'}$-module.
  Keep in mind, that the {\it K\"unneth relations} (for Segre products)
yield isomorphism of graded $R\boxtimes _K R^{'}$-modules
\[D^i_{(R\boxtimes _K R^{'})_+}(M\boxtimes _K M^{'})\cong \bigoplus _{j=0}^{i}D^j_{R_+}(M)\boxtimes _K D^{i-j}_{R^{'}_+}(M^{'})\]
for all $i\in {\N}_0$ (cf. [23], [14], [20]).}  \hfill $\bullet$
\end{remi}

\smallskip

\begin{lem}\label{7.6 Lemma}

Let $d>1$ and set $R:= K[x_1, \cdots, x_d]$ be a polynomial ring
over some infinite field $K$. Let ${\mathbb S} \subseteq \{ 0, 1,
\cdots , d-1 \} \times {\mathbb Z}$ such that
\renewcommand{\descriptionlabel}[1]%
            {\hspace{\labelsep}\textrm{#1}}
\begin{description}
\setlength{\labelwidth}{13mm} \setlength{\labelsep}{1.3mm}
\setlength{\itemindent}{0mm}
\item[\rm{(1)}]
${\mathbb S}$ contains no quasi-diagonal,
\item[\rm{(2)}]
$ {\mathbb S} \cap (\{ 0,\cdots , d - 2
\}\times \Z)$
contains a quasi-diagonal $\{(i, n_i)\mid \ i=0, \cdots, d-2\}$
and
\item[\rm{(3)} ]
$ {\mathbb S} \cap (\{ d-1\} \times {\mathbb Z})$
$\not= \emptyset $.
\end{description}

\smallskip

Then

\renewcommand{\descriptionlabel}[1]%
             {\hspace{\labelsep}\textrm{#1}}
\begin{description}
\setlength{\labelwidth}{13mm} \setlength{\labelsep}{1.3mm}
\setlength{\itemindent}{0mm}

\item[{\rm (a)}] $(d-1,n) \notin {\mathbb S}$ for all $n \ll 0$,

\item[{\rm (b)}] There is a family $(M_k)_{k \in {\mathbb N}}$ of finitely generated graded  $R$-modules, locally free of
rank $\leq ((d-1)!)^2$ on $\Proj(R)$ such that the set
$\{d^i_{M_k}(n)\mid \ k\in \N\}$ is finite for all $(i, n)\in
{\mathbb S}$ and
\[ \lim _{k \rightarrow \infty } d^{d-1}_{M_k}(r)= \infty , \mbox{ where } \ r := \inf
\{ n \in {\mathbb Z} \big\arrowvert (d-1,n) \in {\mathbb S}\}- 1.\]
\end{description}
\end{lem}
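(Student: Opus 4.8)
The idea is to build the modules $M_k$ as Segre products of two ingredients: one fixed module $N$ whose cohomology "realizes" the quasi-diagonal $\{(i,n_i)\mid i=0,\dots,d-2\}$ in $\{0,\dots,d-2\}\times\Z$ in a controlled way, and a one-parameter family $(L_k)_{k\in\N}$ of modules over a polynomial ring in two variables (so of dimension $\leq 2$, contributing only levels $0$ and $1$) whose first cohomology at the critical spot $r$ grows without bound but whose remaining cohomology stays bounded. The Künneth relations of Reminder~4.5 then express $D^i_{(R\boxtimes_K R')_+}(N\boxtimes_K L_k)$ as $\bigoplus_{j}D^j_{R_+}(N)\boxtimes_K D^{i-j}_{R'_+}(L_k)$, and since $L_k$ has only $j=0,1$ surviving, each cohomology length of the Segre product is a finite sum of products of a (bounded, $k$-independent) length coming from $N$ and a length coming from $L_k$. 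Choosing the grading shifts so that the growing contribution $d^1_{L_k}$ lands exactly in bidegree $(d-1,r)$ after Segre-multiplying with the top nonzero graded piece of $D^{d-2}_{R_+}(N)$ (or $D^{d-1}$ — one has to check the index bookkeeping), one gets $d^{d-1}_{M_k}(r)\to\infty$ while all the spots in $\MS$ stay bounded.

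The first order of business is part~(a). Here one argues purely combinatorially from hypotheses (1), (2), (3): pick the quasi-diagonal $T=\{(i,n_i)\mid i=0,\dots,d-2\}$ inside $\{0,\dots,d-2\}\times\Z$ guaranteed by~(2), and suppose toward a contradiction that $(d-1,n)\in\MS$ for infinitely many (equivalently, all sufficiently negative) $n$. Then for some $n<n_{d-2}$ we would have $(d-1,n)\in\MS$, and $T\cup\{(d-1,n)\}$ is a quasi-diagonal in $\{0,\dots,d-1\}\times\Z$ contained in $\MS$ — contradicting~(1). Hence $(d-1,n)\notin\MS$ for all $n\ll 0$, which also makes $r:=\inf\{n\mid (d-1,n)\in\MS\}-1$ a well-defined integer (the inf is taken over a nonempty set bounded below), so part~(b)'s statement even makes sense.

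For part~(b) the concrete construction is the heart of the matter. Over $R'=K[y_0,y_1]$ take a family $L_k$ — for instance $L_k$ a suitable shift of $R'/(f_k)$ for a form $f_k$ of degree $k$, or more robustly $L_k=\bigoplus^{a_k}(R'/y_0^{b_k})(c_k)$ with the parameters tuned — so that $\depth$ and generic rank are fixed, $L_k$ is locally free of bounded rank on $\Proj(R')$ off the relevant support, $D^0_{R'_+}(L_k)$ and $D^1_{R'_+}(L_k)=H^2_{R'_+}(L_k)$ are each zero outside a fixed $k$-independent band of degrees except that $d^1_{L_k}$ at one designated degree tends to $\infty$. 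One then sets $M_k:=N\boxtimes_K L_k$ for an appropriate fixed $N$ over a polynomial ring in $d$ variables whose $D^j_{R_+}(N)$ are finitely generated with the "ends" arranged to realize $T$, and reads off from Künneth that for $(i,n)\in\MS$ the length $d^i_{M_k}(n)=\sum_{j+j'=i}\sum_{p+q=n}\length\big(D^j_{R_+}(N)_p\otimes_K D^{j'}_{R'_+}(L_k)_q\big)$; because $j'\in\{0,1\}$ and all but the one critical $(j',q)$-piece of $L_k$ is bounded, and because $(i,n)\in\MS$ forces the index $j<i\le d-1$ to avoid the one slot where things blow up (this is exactly where part~(a) and hypothesis~(1) are used), every such sum is bounded in $k$; whereas at $(d-1,r)$ the term $\length\big(D^{d-1-1}_{R_+}(N)_{\text{top}}\otimes_K D^1_{R'_+}(L_k)_{\text{critical}}\big)$ survives and grows. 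The local-freeness of $M_k$ on $\Proj(R)$ with rank $\leq((d-1)!)^2$ follows since $N$ can be taken locally free of rank $\leq (d-1)!$ and $L_k$ locally free of rank $\leq (d-1)!$ on the respective projective schemes, and Segre product of locally free sheaves multiplies ranks.

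The main obstacle is the bookkeeping in part~(b): one must simultaneously (i) choose $N$ so that its nonvanishing cohomological graded pieces sit at exactly the degrees dictated by the target quasi-diagonal $T$ shifted appropriately, (ii) choose the family $L_k$ so that its two-level cohomology has exactly one unbounded slot and that slot, after Segre-convolution with the right graded piece of $N$, lands in bidegree $(d-1,r)$ and nowhere in $\MS$, and (iii) verify that local-freeness and the rank bound $((d-1)!)^2$ are preserved. Each piece is a routine application of known realization results for cohomological patterns (Reminder~3.2(b)) together with the Künneth formula, but assembling them so that the growth occurs \emph{precisely} at $(d-1,r)$ and the boundedness holds at \emph{every} point of $\MS$ requires careful tracking of the degree shifts — that is where I expect the real work to lie.
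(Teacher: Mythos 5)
Part (a) of your proposal is fine and agrees with the paper. The core of part (b), however, rests on a gadget that cannot exist. First, a bookkeeping point: the K\"unneth relation for Segre products pairs \emph{equal} degrees, i.e. $d^i_{N\boxtimes_K L_k}(n)=\sum_{j}d^j_N(n)\,d^{i-j}_{L_k}(n)$, not a convolution $\sum_{p+q=n}$ as you wrote. More seriously, there is no family $(L_k)_k$ of finitely generated graded modules over $R'=K[y_0,y_1]$ of bounded rank whose $d^1_{L_k}=h^2_{L_k}$ is unbounded at one designated degree and bounded (let alone zero outside a fixed band) elsewhere: graded local duality gives $h^2_{L}(n)=\dim_K\Hom_{R'}(L,R')_{-n-2}$, and $\Hom_{R'}(L,R')$ is graded free of rank $\rank(L)$, so $h^2_L(n)=\sum_{j=1}^{\rank(L)}\dim_K R'_{c_j-n}$ for suitable integers $c_j$. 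Hence $h^2_{L_k}$ is non-increasing in $n$, and if the ranks are bounded, unboundedness of $h^2_{L_k}(r)$ in $k$ forces unboundedness at \emph{every} fixed degree $n$. So the growth of the varying factor can never be confined to one slot; it occurs at all degrees, and consequently the fixed factor $N$ must satisfy the avoidance condition $d^{\,i-\text{shift}}_N(n)=0$ for every $(i,n)\in\MS$ at or above the shift level --- the opposite of your plan to have $N$ ``realize'' the quasi-diagonal $T\subseteq\MS$, which would produce blow-up exactly at points of $\MS$.

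Moreover, your choice of a two-variable (dimension $\leq 2$) varying factor freezes the level shift at $1$, and this already fails under the hypotheses: for $d=4$ take $\MS=(\{0,1\}\times\Z)\cup\{(2,n)\mid n\le 0\}\cup\{(3,n)\mid n\ge 0\}$, which satisfies (1)--(3). Since $(1,n)\in\MS$ for all $n$, the avoidance condition would require $d^0_N(n)=0$ for infinitely many large $n$, which is impossible for a module $N$ of positive dimension because $d^0_N(n)\neq 0$ for all $n\gg 0$. This is precisely why the paper lets the shift depend on $\MS$: it sets $s:=\min\{i\le d-2\mid \MS\cap(\{i\}\times\Z)\ \mbox{is bounded above}\}$, takes the varying factor $R^{s+1}(-k)$ (whose $d^s$ grows at all degrees while $d^0$ stays bounded at each degree), and --- using hypothesis (1) together with maximal choices of the $n_i$ --- constructs an explicit minimal combinatorial pattern $P$ of width $d-s-1$ containing $(d-s-1,-1)$ (after normalizing $r=-1$) and avoiding $\{(i-s,n)\mid (i,n)\in\MS,\ i\ge s\}$, realizes $P$ by a module $N$ via [5, Proposition 4.5], and sets $M_k:=R^{s+1}(-k)\boxtimes_K N$. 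That combinatorial construction of $P$, which you defer as routine degree-tracking, is the actual heart of the proof and the place where hypothesis (1) is used; without it (and with the impossible $L_k$) your argument does not go through.
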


\begin{proof} For all $i \in \{1, \cdots, d\}$ we write $R^i:=
K[x_1, \cdots, x_i]$ and ${\mathbb S}^i := {\mathbb S} \cap (\{ i\}
\times {\mathbb Z})$. Statement (a) follows immediately from our
hypotheses on the set ${\mathbb S}$. So, it remains to prove
statement (b). After shifting appropriately we may assume that $r =
- 1$.

\smallskip

By our hypotheses on ${\mathbb S}$ it is clear that ${\mathbb S}^i
\not= \emptyset $ for all $i \in \{ 0, \cdots , d-1\} $. Let
\[\alpha_i:= \sup \{n\in \Z\mid \ (i, n)\in {\mathbb S}^i\} \ \mbox{for all} \ i\in \{0, \cdots, d-1\}.\]
 Then by our hypothesis on $\mathbb S$ we have  $\alpha_i < \infty $ for some $i \in \{ 1, \cdots , d - 2
\} $. Let
\[ s:= \min \{ i \in \{ 0, \cdots , d - 2 \} \big\arrowvert \alpha_i < \infty \}\]
and
\[ n_s:= \max \{ n \in {\mathbb Z} \big\arrowvert (s,n) \in
{\mathbb S}^s \}.\]

Now, we may find a quasi-diagonal $\{ (i,n_i) \big\arrowvert i = 0,
\cdots , d - 2 \} $ in ${\mathbb S}\cap (\{0, \cdots, d-2\}\times \Z)$ such that for all
$i \in \{ s + 1, \cdots , d - 2 \} $ we have
\[ n_i = \max \{ n < n_{i - 1} \big\arrowvert (i,n) \in{\mathbb S} \}.\]

As ${\mathbb S}$ contains no quasi-diagonal, we must have $n_{d - 2}
\leq 0$. For all $m, n \in {\mathbb Z} \cup \{ \pm \infty \} $ we
write $] m,n [ := \{ t \in {\mathbb Z} \big\arrowvert  m < t < n \}
$. Using this notation we set
\[ t_{-1}:=  \infty ; \ t_{d-s-1}:= -\infty ; \ t_i := \max \{ d - s -
i - 2, n_{i + s}\} , \ \forall i \in \{ 0, \cdots , d - s-2 \}\]
and write
\[ P := \bigcup ^{d - s-1}_{i = 0} (\{ i \} \times ] t_i, t_{i - 1}[ ).\]

Observe, that by our choice of the pairs $(i,n_i)$ we
have
\[ \mbox{if } s \leq i \leq d-1 \mbox{ and } (i,n) \in {\mathbb S},
\mbox{ then } (i - s, n) \notin P. \leqno{(\ast )} \]
 Moreover by [5, 2.7] the set $P \subseteq \{ 0, \cdots , d -
s-1\} \times {\mathbb Z}$ is a minimal combinatorial pattern of
width $d - s-1$. So, by [5, Proposition 4.5], there exists a
finitely generated  $R^{d-s}$-module $N$, locally free of rank
$\le (d - s-1)$! on $\Proj(R^{d-s})$ such that ${\mathcal P}_{N}
= P$.

\smallskip

Now, consider the Segre product ring $S:= R^{s+1}\boxtimes _ K
R^{d-s}$ and for each $k \in {\mathbb N}$ let $M_k$ be the
finitely generated graded $S$-module $R^{s+1}(-k)\boxtimes _K N$,
which is locally free  of rank $\leq (d-1)!/s!$ on $\Proj(S)$.
Observe that
\[ d^j_{R^{s+1}} \equiv 0
\mbox{ for all } j \not= 0,s \mbox{ and } d^l _{N} \equiv 0
\mbox{ for all } l > d - s-1.\]

Now, we get from the K\"unneth relations (cf. Reminder 4.5) for all $i \in \{ 0, \cdots
, d-1\}$ and all $n \in {\mathbb Z}$
\[ d^i_{M_k}(n) = \begin{cases}
d^0_{R^{s+1}}(-k + n)d^i_{N}(n) &\mbox{for } 0 \leq i < s  \\
d^0_{R^{s+1}}(-k + n)d^i_{N}(n) + d^s_{R^{s+1}}(- k + n)d^{i-s}_{N}(n)&\mbox{for } s \leq i \leq d - s-1 , \\
d^s_{R^{s+1}}(- k + n)d^{i - s}_{N}(n) &\mbox{for } d - s -1< i \leq
d-1.\end{cases}\]

As $P = {\mathcal P}_{N}$ and in view of $(\ast )$ we have $d^{i -
s} _{N} (n) = 0$ for all $(i,n) \in {\mathbb S}$ with $s \leq i \leq
d-1$. Moreover, for all $n \in {\mathbb Z}$ and all $k \in {\mathbb
N}$ we have $d^0_{R ^{s+1}}(- k + n) \leq d^0_{R^{s+1}} (n - 1)$. So
for all $k \in {\mathbb N}$ and all $(i, n) \in {\mathbb S}$ we get
\[ d^i_{M_k}(n) \begin{cases}
\leq d^0_{R^{s+1}}(n - 1)  d^i_{N}(n), &\mbox{for } 0 \leq i \leq d - s-1 , \\
= 0 , &\mbox{if } d - s -1< i \leq d-1.
\end{cases}\]

Therefore the set $\{d^i_{M_k}(n)\mid \ k\in \N\}$ is finite for all $(i, n)\in \mathcal{S}$.

\smallskip

Moreover $d^{d-1}_{M_k}(-1) = d^s_{R^{s+1}}(-k - 1)  d^{d -
s-1}_{N}(-1)$. As $(d - s-1, - 1) \in P$ we have $d^{d - s-1}
_{N}(-1)
> 0$ and hence $d^s_{R^{s+1}}(-k-1)= \begin{pmatrix}
  {k} \\
  {s}
\end{pmatrix}$ implies that
\[ \lim _{k \rightarrow \infty } d^{d-1}_{M_k}(-1) = \infty.\]

As $\dim(S)= d$, there is a finite injective morphism
$R\longrightarrow S$ of graded rings, which turns $S$ in an
$R$-module of rank $(d-1)!/s!(d-s-1)!$. So $M_k$ becomes an
$R$-module locally free of rank $\leq
[(d-1)!/s!(d-s-1)!][(d-1)!/s!]\leq ((d-1)!)^2$ on $\Proj(R)$.
Moreover, by Graded Base Ring Independence of Local Cohomology,
we get isomorphisms of graded $R$-modules $D^j_{S_+}(M_k)\cong
D^j_{R_+}(M_k)$ for all $j\in {\N}_0$. Now, our claim follows
easily.
\end{proof}

\begin{defin}\label{7.7 Definition}

{\rm A class ${\mathcal D} \subseteq {\mathcal D}^d$ is said to
be {\it big}, if for each $t \in \{ 1, \cdots , d\} $ there is an
infinite field $K$ such that $\D$ contains all pairs $(R, M)$ in which
$R$ is the polynomial ring $K[x_1, \cdots, x_t]$. }  \hfill $\bullet$
\end{defin}

\begin{prop}\label{7.8 Proposition}

Let ${\mathcal C} \subseteq \D^d$ be a big class and let ${\mathbb
S} \subseteq \{ 0, \cdots , d-1 \} \times {\mathbb Z}$ be a set
which bounds cohomology in ${\mathcal C}$. Then ${\mathbb S}$
contains a quasi-diagonal.
\end{prop}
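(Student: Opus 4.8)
The plan is to prove the contrapositive by induction on $d$, using Lemma 4.6 as the engine that, whenever $\mathbb{S}$ fails to contain a quasi-diagonal but nonetheless reaches the top row $\{d-1\}\times\mathbb Z$, produces a family of modules witnessing that $\mathbb S$ does not bound cohomology in the big class $\mathcal C$. So suppose $\mathbb S\subseteq\{0,\dots,d-1\}\times\mathbb Z$ contains no quasi-diagonal; I must exhibit a family $(h^\sigma)_{\sigma\in\mathbb S}$ of non-negative integers such that $\{(R,M)\in\mathcal C\mid d^i_M(n)\le h^{(i,n)}\ \forall(i,n)\in\mathbb S\}$ is not of finite cohomology. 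First I would dispose of the base case $d=1$: here $\mathbb S\subseteq\{0\}\times\mathbb Z$ contains no quasi-diagonal means $\mathbb S=\emptyset$, and since $\mathcal C$ is big it contains all pairs $(K[x],M)$ over some infinite field $K$; the modules $K[x](-k)$, say, have $d^0(n)=h^0_{K[x](-k)}(n)$ growing without bound and $\mathbb S=\emptyset$ imposes no constraint, so $\mathcal C$ (hence the unconstrained subclass) is not of finite cohomology. (One should also address the degenerate possibility $d^{0}\equiv 0$-type issues, but over a polynomial ring $d^0_{K[x]}$ is the Hilbert function, which is unbounded.)

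For the inductive step, assume the statement for $d-1$ and let $\mathbb S$ contain no quasi-diagonal. Consider $\mathbb S^{<d-1}:=\mathbb S\cap(\{0,\dots,d-2\}\times\mathbb Z)$. There are two cases. \emph{Case 1: $\mathbb S^{<d-1}$ contains no quasi-diagonal of $\{0,\dots,d-2\}\times\mathbb Z$.} Since $\mathcal C$ is big, the sub-collection $\mathcal C'\subseteq\mathcal C$ of pairs $(R,M)$ with $R=K[x_1,\dots,x_{d-1}]$ lies in $\mathcal D^{d-1}$ and is itself big (as a class inside $\mathcal D^{d-1}$); by Remark 4.7(C), if $\mathbb S$ bounded cohomology in $\mathcal C$ then $\mathbb S^{<d-1}$ would bound cohomology in $\mathcal D^{d-1}\cap\mathcal C\supseteq\mathcal C'$, contradicting the induction hypothesis applied to $\mathcal C'$. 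Hence $\mathbb S$ does not bound cohomology in $\mathcal C$. \emph{Case 2: $\mathbb S^{<d-1}$ contains a quasi-diagonal $\{(i,n_i)\mid i=0,\dots,d-2\}$ of $\{0,\dots,d-2\}\times\mathbb Z$.} Since the full $\mathbb S$ contains no quasi-diagonal of $\{0,\dots,d-1\}\times\mathbb Z$, the top row must meet $\mathbb S$ nontrivially only in a way that cannot be appended below $n_{d-2}$; in particular (using $(\pi)$-style bookkeeping, or just the pigeonhole on the integers $n_i$) we get $\mathbb S\cap(\{d-1\}\times\mathbb Z)\ne\emptyset$ yet $\mathbb S$ still has no quasi-diagonal — these are exactly hypotheses (1), (2), (3) of Lemma 4.6 after possibly passing to $d>1$ (the case $d=1$ is already done). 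Then invoke Lemma 4.6 with the infinite field $K$ furnished by bigness for $t=d$: we obtain modules $(M_k)_{k\in\mathbb N}$ over $R=K[x_1,\dots,x_d]$, locally free of bounded rank, with $\{d^i_{M_k}(n)\mid k\in\mathbb N\}$ finite for every $(i,n)\in\mathbb S$ but $d^{d-1}_{M_k}(r)\to\infty$ where $r=\inf\{n\mid(d-1,n)\in\mathbb S\}-1$. Choosing $h^{(i,n)}:=\sup_k d^i_{M_k}(n)<\infty$ for each $(i,n)\in\mathbb S$, all the $M_k$ lie in the constrained subclass, yet their cohomology tables are pairwise distinct for infinitely many $k$ (because of the $d^{d-1}_{M_k}(r)$ values, noting $(d-1,r)\notin\mathbb S$ by Lemma 4.6(a)). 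Hence the constrained class is not of finite cohomology, so $\mathbb S$ does not bound cohomology in $\mathcal C$.

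The main obstacle, I expect, is the bookkeeping in Case 2 that reduces the situation precisely to the three hypotheses of Lemma 4.6 — in particular verifying that "$\mathbb S^{<d-1}$ contains a quasi-diagonal of length $d-1$" together with "$\mathbb S$ contains no quasi-diagonal of length $d$" and "$\mathbb S$ meets the top row" is not vacuous and does force exactly hypotheses (1)–(3); one has to be a little careful because a quasi-diagonal of $\mathbb S^{<d-1}$ might be extendable by some top-row point even if a \emph{different} choice isn't, so the correct statement is that \emph{some} choice of the length-$(d-1)$ quasi-diagonal cannot be extended, and Lemma 4.6 is engineered (via its internal construction of the $n_i$) to select such a choice. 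A secondary point is checking that $\mathcal C'$ in Case 1 is genuinely a big subclass of $\mathcal D^{d-1}$, which follows directly from Definition 4.8 since bigness of $\mathcal C$ in $\mathcal D^d$ supplies, for every $t\le d-1<d$, an infinite field $K$ with all $(K[x_1,\dots,x_t],M)\in\mathcal C$. Once these structural reductions are in place the quantitative work is entirely absorbed into Lemma 4.6 and Theorem 3.5, so the proof is short.
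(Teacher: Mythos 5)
There is a genuine gap in Case 2 of your inductive step: the derivation of hypothesis (3) of Lemma 4.6. From ``$\mathbb{S}$ contains no quasi-diagonal'' together with ``$\mathbb{S}\cap(\{0,\dots,d-2\}\times\mathbb{Z})$ contains a quasi-diagonal'' it does \emph{not} follow that $\mathbb{S}\cap(\{d-1\}\times\mathbb{Z})\neq\emptyset$; on the contrary, the simplest way for the two conditions to coexist is that the top row is empty (e.g.\ $\mathbb{S}=\{0,\dots,d-2\}\times\mathbb{Z}$ has length-$(d-1)$ quasi-diagonals but no length-$d$ one). In that sub-case Lemma 4.6 simply does not apply, and your contrapositive argument produces no family witnessing failure of boundedness. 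The paper closes exactly this hole by a separate preliminary step: it uses the shifted free modules $(R,R(-k))$, $R=K[x_1,\dots,x_d]$, supplied by bigness, for which $\{d^i_{R(-k)}(n)\mid k\in\mathbb{N}\}$ is finite at every spot of $\{0,\dots,d-2\}\times\mathbb{Z}$ while $d^{d-1}_{R(-k)}(0)\to\infty$; hence if $\mathbb{S}$ bounds cohomology (equivalently, in your contrapositive: to show it does not bound when the top row is empty) one concludes $\mathbb{S}\cap(\{d-1\}\times\mathbb{Z})\neq\emptyset$. You must add this auxiliary family (or an equivalent one) to handle the empty-top-row sub-case; the rest of Case 2 (taking $h^{(i,n)}:=\sup_k d^i_{M_k}(n)$ and using the divergence of $d^{d-1}_{M_k}(r)$) and Case 1 are sound and match the paper's use of Remark 3.7(C) and induction.

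A secondary, fixable error is in your base case $d=1$: for $M=K[x](-k)$ one has $h^0_M\equiv 0$ and $d^0_M\equiv 1$ (the functions $d^i$ and $h^i$ are not equal, and $D^0_{R_+}(K[x](-k))\cong K[x,x^{-1}](-k)$), so these modules all have the \emph{same} cohomology table and do not show that $\mathcal{C}$ fails to be of finite cohomology when $\mathbb{S}=\emptyset$. Replace them, for instance, by the free modules $K[x]^{\oplus k}$, for which $d^0\equiv k$, which bigness does put into $\mathcal{C}$. (The analogous care is needed in the $R(-k)$ argument above, which genuinely requires $d\geq 2$; for $d=1$ the free modules of growing rank again do the job.)
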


\begin{proof} There is an infinite field $K$ such that with $R:= K[x_1, \cdots,
x_d]$ we have $(R, R(-k)) \in {\mathcal C}$ for all $k \in
{\mathbb N}$. The  set $\{d^i_{R(-k)}(n)\mid \ k\in \N\}$ is
finite for all $(i, n)\in \{ 0, \cdots , d - 2 \} \times {\mathbb
Z}$ and $\displaystyle \lim _{k \rightarrow \infty }
d^{d-1}_{R(-k)}(0) = \infty $. It follows that ${\mathbb
S}^{d-1}:= {\mathbb S} \cap (\{ d -1\} \times {\mathbb Z}) \not=
\emptyset $. This proves our claim if $d = 1$.

\smallskip

 So, let $d > 1$. Clearly $\D
^{d - 1} \cap {\mathcal C} \subseteq \D^{d - 1}$ is a big class
and ${\mathbb S}^{< (d - 1)} = {\mathbb S} \cap (\{ 0, \cdots , d
- 2 \} \times {\mathbb Z})$ bounds cohomology in $\D^{d-1} \cap
{\mathcal C}$ (s. Remark 3.7 (C)). So, by induction the set
${\mathbb S}^{<(d - 1)}$ contains a quasi-diagonal. If ${\mathbb
S}$ would contain no quasi-diagonal, Lemma 4.6 would imply that
for our polynomial ring $R$ there is a class ${\mathcal D}$ of
pairs $(R, M)\in \D^d$ which is
not of bounded cohomology but such that the set $\{d^i_M(n)\mid \ (R, M)\in \D\}$
is finite for all $(i, n)\in \mathbb{S}$. As $\C$ is a big class, we have ${\mathcal D}
\subseteq {\mathcal C}$, and this would imply the contradiction that
${\mathbb S}$ does not bound cohomology in $\C$.
\end{proof}

\begin{thm}\label{7.9 Theorem}

Let ${\mathcal C} \subseteq \D^d$ be a big class and let ${\mathbb
S} \subseteq \{ 0, \cdots , d-1 \} \times {\mathbb Z}$. Then
${\mathbb S}$ bounds cohomology in ${\mathcal C}$ if and only if
${\mathbb S}$ contains a quasi-diagonal.
\end{thm}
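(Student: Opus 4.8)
The plan is to combine the two directions already essentially assembled in the excerpt. For the ``if'' direction, suppose $\mathbb{S}$ contains a quasi-diagonal. Then by Corollary 4.4, $\mathbb{S}$ bounds cohomology in every subclass of $\D^d$, and in particular in the big class $\mathcal{C}$. This direction requires no use of the hypothesis that $\mathcal{C}$ is big.

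For the ``only if'' direction, suppose $\mathbb{S}$ bounds cohomology in $\mathcal{C}$. Since $\mathcal{C}$ is big, Proposition 4.8 applies directly and yields that $\mathbb{S}$ contains a quasi-diagonal. So in fact the theorem is an immediate juxtaposition of Corollary 4.4 and Proposition 4.8, and the proof is essentially one line in each direction.

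The only subtlety worth flagging is that $\mathbb{S}$ is assumed to live in $\{0,\cdots,d-1\}\times\mathbb{Z}$, which is exactly the ambient set required both by the definition of quasi-diagonal (Definition 4.3) and by the hypotheses of Proposition 4.8; so there is no index-range mismatch to repair. I do not expect any genuine obstacle here — the real work was done in Lemma 4.1, Lemma 4.6, and the inductive bookkeeping inside Proposition 4.8. The proof to write is simply:

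\begin{proof}
If $\mathbb{S}$ contains a quasi-diagonal, then by Corollary 4.4 the set $\mathbb{S}$ bounds cohomology in every subclass of $\D^d$, hence in particular in $\mathcal{C}$. Conversely, if $\mathbb{S}$ bounds cohomology in the big class $\mathcal{C}$, then $\mathbb{S}$ contains a quasi-diagonal by Proposition 4.8.
\end{proof}
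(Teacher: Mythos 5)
Your proof is correct and coincides with the paper's own argument, which also just cites Corollary 4.4 for the ``if'' direction and Proposition 4.8 for the ``only if'' direction. Nothing further is needed.
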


\begin{proof} Clear by Corollary 4.4 and
Proposition 4.8.
\end{proof}

\begin{cor} \label{7.10 Corollary}

The set ${\mathbb S} \subseteq \{ 0, \cdots , d-1\} \times
{\mathbb Z}$ bounds cohomology in $\D^d$ if and only if ${\mathbb
S}$ contains a quasi-diagonal.
\end{cor}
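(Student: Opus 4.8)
The statement to prove is Corollary~\ref{7.10 Corollary}: the set $\MS \subseteq \{0,\dots,d-1\}\times\Z$ bounds cohomology in $\D^d$ if and only if $\MS$ contains a quasi-diagonal. The plan is to deduce this immediately from Theorem~\ref{7.9 Theorem} by verifying that $\D^d$ is itself a \emph{big} class in the sense of Definition~\ref{7.7 Definition}. Once that observation is in place, there is nothing left to do: Theorem~\ref{7.9 Theorem} applied to $\C = \D^d$ gives exactly the asserted equivalence.

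**The one thing to check.** A class $\D \subseteq \D^d$ is big if for each $t \in \{1,\dots,d\}$ there is an infinite field $K$ such that $\D$ contains all pairs $(R,M)$ with $R = K[x_1,\dots,x_t]$. For $\D^d$ this is transparent: fix any infinite field $K$ (the same one works for every $t$), put $R := K[x_1,\dots,x_t]$, and let $M$ be an arbitrary finitely generated graded $R$-module. Then $R_0 = K$ is Artinian, $R$ is Noetherian homogeneous, and $\dim(M) \leq \dim(R) = t \leq d$, so $(R,M) \in \D^d$. Hence $\D^d$ contains \emph{all} such pairs, and in particular $\D^d$ is big.

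**Conclusion.** Applying Theorem~\ref{7.9 Theorem} with $\C := \D^d$ yields precisely: $\MS$ bounds cohomology in $\D^d$ if and only if $\MS$ contains a quasi-diagonal.

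**Obstacle assessment.** There is essentially no obstacle here; the corollary is a direct specialization of the already-proved Theorem~\ref{7.9 Theorem}, and the only (trivial) verification is that $\D^d$ satisfies the definition of a big class. All of the genuine work — the K\"unneth/Segre-product construction in Lemma~\ref{7.6 Lemma}, the inductive descent in Proposition~\ref{7.8 Proposition}, and the quasi-diagonal boundedness in Proposition~\ref{7.2 Proposition} — has been carried out earlier in Section~4, so the proof of the corollary is a single line.

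\begin{proof}
The class $\D^d$ is big in the sense of Definition~\ref{7.7 Definition}: fixing an arbitrary infinite field $K$, for each $t \in \{1,\dots,d\}$ and each finitely generated graded module $M$ over $R := K[x_1,\dots,x_t]$ we have $\dim(M) \leq t \leq d$, so $(R,M) \in \D^d$. Thus the claim follows by applying Theorem~\ref{7.9 Theorem} to $\C := \D^d$.
\end{proof}
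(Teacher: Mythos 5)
Your proof is correct and follows the paper's own route: the paper also deduces the corollary directly from Theorem 4.9 applied to $\C = \D^d$, with the (trivial) fact that $\D^d$ is a big class left implicit, which you have simply made explicit.
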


\begin{proof} Clear by Theorem 4.9.
\end{proof}

\vskip 1 cm

\section{Bounding Invariants}
\label{8. Bounding Invariants}

{\rm In this section we investigate numerical invariants which bound
cohomology.}

\smallskip

\begin{defins}\label{8.1 Definition}

\rm{(A) (s. [2], [8], [9]). Let ${\mathcal C} \subseteq {\mathcal
D}^d$ be a subclass. A {\it numerical invariant} \rm{on the class
${\mathcal C}$ is a map
\[ \mu : {\mathcal C} \rightarrow {\mathbb Z} \cup \{ \pm \infty \} \]

such that for any two pairs $(R, M), (R, N) \in {\mathcal C}$ with
$M \cong N$ we have }}$\mu (R, M) = \mu (R, N)$. We shall write
$\mu(M)$ instead of $\mu(R, M)$.

\smallskip

\rm{(B) Let $(\mu _i)^r_{i = 1}$ be a family of numerical
invariants on the subclass ${\mathcal C} \subseteq {\mathcal
D}^d$. We say that the family $(\mu_i)_{i=1}^r$
{\it bounds cohomology on the class} ${\mathcal C}$, if for each $(n_1, \cdots , n_r)
\in ({\mathbb Z} \cup \{ \pm \infty \} )^r$ the class
\[ \{ (R, M) \in {\mathcal C} \big\arrowvert \mu _i(M) = n_i \mbox{ for all } i \in \{ 1,\cdots , r
\}\}\] is of bounded cohomology.}

\smallskip

\rm{(C) A numerical invariant $\mu $ on the class ${\mathcal C}
\subseteq {\mathcal D}^d$ is said to be {\it finite} if $\mu (M) \in
{\mathbb Z}$ for all} $(R, M) \in {\mathcal C}$.

\smallskip

\rm{(D) A numerical invariant $\mu $ on the class ${\mathcal C}
\subseteq {\mathcal D}^d$ is said to be {\it positive} if $\mu (M)
\geq 0$ for all }$(R, M) \in {\mathcal C}$. \hfill $\bullet$
\end{defins}

\begin{rem}\label{8.2 Remark}

\rm{(A) If $\mu : {\mathcal C} \rightarrow {\mathbb Z} \cup \{ \pm
\infty \} $ is a numerical invariant on the class ${\mathcal C}
\subseteq {\mathcal D}^d$ and if ${\mathcal D} \subseteq {\mathcal
C}$, then the restriction $\mu \upharpoonright _{\mathcal D} :
{\mathcal D} \rightarrow {\mathbb Z} \cup \{ \pm \infty \} $ is a
numerical invariant on the class ${\mathcal D}$. Clearly, if $\mu $
is finite (resp. positive) then so is} $\mu \upharpoonright
_{\mathcal D}$.

\smallskip

\rm{(B) If $(\mu _i)^r_{i = 1}$  bounds cohomology on the class ${\mathcal C} \subseteq {\mathcal
D}^d$ and if ${\mathcal D} \subseteq {\mathcal C}$, then $(\mu _i
\upharpoonright _{\mathcal D})^r_{i = 1}$  bounds cohomology in }${\mathcal D}$.

\smallskip

\rm{(C) A family $(\mu _i)^r_{i = 1}$ of positive numerical
invariants bounds cohomology in ${\mathcal C}$ if and only if for
all $(n_1, \cdots , n_r) \in ({\mathbb N}_0 \cup \{ \infty \})^r $
the class
\[ \{ (R, M) \in {\mathcal C} \big\arrowvert \mu _i
      (M) \leq n_i \mbox{ for all } i \in \{ 1, \cdots ,
      r \} \}
   \]
is of bounded cohomology.}

\smallskip

\rm{(D) A family $(\mu _i)^r_{i = 1}$ of finite positive invariants
bounds cohomology on ${\mathcal C}$ if and only if the sum invariant
$\sum ^r _{i = 1} \mu _i : {\mathcal C} \rightarrow {\mathbb N}_0$
bounds cohomology in ${\mathcal C}$}. \hfill $\bullet$
\end{rem}

\begin{rem}\label{8.3 Notation}

\rm{Let $i \in {\mathbb N}_0$ and $n \in {\mathbb Z}$. Then, the map
\[d^i_{\bullet}(n) : {\mathcal D}^d \rightarrow {\mathbb N}_0 ; \ ((R, M)
\mapsto d^i_{M}(n)) \]

is a finite positive numerical invariant on ${\mathcal D}^d$.}  \hfill $\bullet$
\end{rem}

\begin{thm}\label{8.4 Theorem}
 Let $(n_i)^{d-1}_{i = 0}$ be a sequence of integers
such that $n_0 > n_1 > n_2 > \ldots > n_{d-1}$. Then the family of numerical invariants
$(d^i_{\bullet}(n_i)) ^{d-1}_{i = 0}$  bounds   cohomology in ${\mathcal D}^d$.
\end{thm}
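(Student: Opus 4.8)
The plan is to reformulate the statement in terms of the notion of "bounding cohomology" and then invoke the results already established for quasi-diagonals. Observe that the set $\mathbb{S} := \{(i,n_i) \mid i = 0, \cdots, d-1\}$ with $n_0 > n_1 > \cdots > n_{d-1}$ is, by Definition \ref{7.3 Definition}, precisely a quasi-diagonal. Hence, by Corollary \ref{7.4 Corollary}, the set $\mathbb{S}$ bounds cohomology in $\D^d$; that is, for every family $(h^i)_{i=0}^{d-1}$ of non-negative integers, the class
\[
\C' := \{(R,M) \in \D^d \mid d^i_M(n_i) \leq h^i \text{ for } i = 0, \cdots, d-1\}
\]
is of finite cohomology.

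Next I would translate this into the language of numerical invariants. By Remark \ref{8.3 Notation}, each $d^i_{\bullet}(n_i) : \D^d \to \N_0$ is a finite positive numerical invariant, so the family $(d^i_{\bullet}(n_i))_{i=0}^{d-1}$ is a finite family of finite positive numerical invariants. To show this family bounds cohomology in $\D^d$ in the sense of Definition \ref{8.1 Definition}(B), I must show that for each $(m_0, \cdots, m_{d-1}) \in (\Z \cup \{\pm\infty\})^{d-1}$ — here actually in $(\N_0 \cup \{\infty\})^d$ since the invariants are positive — the class $\{(R,M) \in \D^d \mid d^i_M(n_i) = m_i \text{ for all } i\}$ is of bounded cohomology. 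Using Remark \ref{8.2 Remark}(C), it suffices instead to check that for all $(m_0, \cdots, m_{d-1}) \in (\N_0 \cup \{\infty\})^d$ the class $\{(R,M) \in \D^d \mid d^i_M(n_i) \leq m_i \text{ for all } i\}$ is of bounded cohomology. If some $m_i = \infty$, the corresponding inequality is vacuous and may be dropped, so after relabelling we are reduced to a class defined by finitely many conditions $d^i_M(n_i) \leq h^i$ with $h^i \in \N_0$ — exactly the class $\C'$ above.

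Then I would conclude: the class $\C'$ is of finite cohomology by Corollary \ref{7.4 Corollary}, hence of bounded cohomology by Theorem \ref{6.5 Theorem} (the equivalence of finiteness and boundedness of cohomology). This establishes that the family $(d^i_{\bullet}(n_i))_{i=0}^{d-1}$ bounds cohomology in $\D^d$, completing the proof.

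I do not expect any genuine obstacle here: the theorem is essentially a restatement of Corollary \ref{7.4 Corollary} in the vocabulary of numerical invariants introduced in Section 5. The only points requiring care are bookkeeping ones — matching the positivity reformulation of Remark \ref{8.2 Remark}(C), handling the entries equal to $\infty$ by discarding vacuous constraints, and citing Theorem \ref{6.5 Theorem} to pass between "finite cohomology" and "bounded cohomology" as the two definitions differ formally. One might alternatively invoke Remark \ref{8.2 Remark}(D) and work with the single sum invariant $\sum_{i=0}^{d-1} d^i_{\bullet}(n_i)$, but the direct argument via $\C'$ is cleaner.
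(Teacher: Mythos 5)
Your core reduction is exactly the paper's: the paper proves this theorem by simply citing Proposition 4.2 (equivalently Corollary 4.4), and your translation into the language of Definition 5.1(B) is the intended routine step. However, one bookkeeping step fails as written. Invoking Remark 5.2(C) obliges you to handle tuples with entries $m_i=\infty$, and your claim that dropping the corresponding inequalities as vacuous reduces you to ``exactly the class $\C'$'' is not correct: if some $m_i=\infty$, the resulting class is cut out by fewer than $d$ of the constraints $d^j_M(n_j)\leq h^j$, i.e. by a proper subset of the quasi-diagonal, and such a class need not be of bounded cohomology (by Proposition 4.8 and Corollary 4.10 a subset of $\{0,\dots,d-1\}\times \Z$ not containing a full quasi-diagonal does not bound cohomology in $\D^d$; in the extreme case where all $m_i=\infty$ your reduction would assert that $\D^d$ itself is of bounded cohomology, which is false --- consider the modules $R(-k)$ as in the proof of Proposition 4.8). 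The trouble comes from taking Remark 5.2(C) at face value for tuples with infinite entries.

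The gap is easily repaired without Remark 5.2(C): each invariant $d^i_{\bullet}(n_i)$ is finite and positive, so in Definition 5.1(B) any tuple containing $\pm\infty$ (or a negative integer) yields the empty class, which is trivially of bounded cohomology; and for a tuple $(m_0,\dots,m_{d-1})\in \N_0^d$ the equality class $\{(R,M)\in\D^d\mid d^i_M(n_i)=m_i \mbox{ for all } i\}$ is contained in your $\C'$, which is of finite cohomology by Proposition 4.2 (via Corollary 4.4), hence of bounded cohomology, and boundedness passes to subclasses by Remark 3.4(A). With that correction your argument is complete and coincides with the paper's own route.
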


\begin{proof}
Clear by Proposition 4.2.
\end{proof}

\begin{remi}\label{8.5 Reminder}

\rm{For each $k\in {\N}_0$ we may define the numerical invariant}
\[\mbox{reg}^k : {\mathcal D}^d \rightarrow {\mathbb Z} \cup \{ -
\infty \} ; \ ((R, M) \mapsto \mbox{reg}^k(M)).\]

\hspace*{\fill $\bullet $}
\end{remi}

\begin{nota}\label{8.6 Notation}

\rm{For $(R, M) \in {\mathcal D}^d$ we set}
\[ \varrho (M):= \begin{cases}
d^0_{M}(\mbox{reg}^2(M)), &\mbox{if } \dim (M)
> 1 , \\ d^0_{M}(0), &\mbox{if } \dim (M)
\leq 1. \end{cases}\]  \hfill $\bullet$
\end{nota}

\begin{rem}\label{8.7 Remark}
\rm{(A) If $(R, M) \in {\mathcal D}^d$ with $\dim (M) \leq 1$, the
cohomological Hilbert function $d^0_{M}$ of $M$
is constant, and this constant is strictly positive if and only if
$M \not= 0$.}

\smallskip

\rm{(B) The function
\[ \varrho : {\mathcal D}^d \rightarrow {\mathbb N}_0 ; \
((R, M) \mapsto \varrho (M))\]

is a finite positive numerical invariant on }${\mathcal D}^d$.
\hfill $\bullet $
\end{rem}

\begin{thm}\label{8.8 Theorem}

The pair of invariants $(\reg^2, \varrho )$   bounds
cohomology in ${\mathcal D}^d$.
\end{thm}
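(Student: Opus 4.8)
The pair of invariants $(\reg^2, \varrho)$ bounds cohomology in ${\mathcal D}^d$.

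I want to use Corollary 4.10 (equivalently Theorem 4.9): a set ${\mathbb S} \subseteq \{0,\cdots,d-1\}\times {\mathbb Z}$ bounds cohomology in ${\mathcal D}^d$ precisely when it contains a quasi-diagonal. So the plan is to fix the values $\reg^2(M) = a$ and $\varrho(M) = c$ and show that, for the resulting subclass ${\mathcal C}_{a,c} \subseteq {\mathcal D}^d$, one can produce a quasi-diagonal $\{(i,n_i)\mid i=0,\dots,d-1\}$ (with $n_0 > n_1 > \cdots > n_{d-1}$) along which cohomology is bounded inside ${\mathcal C}_{a,c}$. By Remark 8.2(C) it suffices to treat the classes where $\reg^2(M)\le a$ and $\varrho(M)\le c$ (using positivity of these invariants, via Remark 8.7(B)); and we may handle the cases $\dim(M)\le 1$ and $\dim(M)>1$ separately, since $\varrho$ is defined by cases.

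First I would set up the high-degree part. For all $i\ge 2$, the definition of $\reg^2$ forces $a_i(M)\le \reg^2(M)-i \le a-i$, i.e. $H^{i}_{R_+}(M)_n = 0$ for $n > a-i$, hence $D^{i}_{R_+}(M)_n = 0$ for $n>a-i$ when $i\ge 1$ (using Reminder 2.2(B)(ii): $D^i_{R_+}(M)\cong H^{i+1}_{R_+}(M)$ for $i>0$, so $d^i_M(n)=h^{i+1}_M(n)$ and this vanishes for $n > a-i-1$). Thus for each $i$ with $1\le i\le d-1$ the value $d^i_M(n)$ is \emph{zero}, hence trivially bounded, at any $n > a-i-1$. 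This gives me, for free, bounded cohomology along the "quasi-diagonal fragment" $\{(i, a-i)\mid i=1,\dots,d-1\}$ (or any strictly decreasing choice of such large $n_i$'s) — these $d-1$ positions have strictly decreasing second coordinates $a-1 > a-2 > \cdots > a-(d-1)$. What is missing is a single position $(0, n_0)$ with $n_0 > a-1$ at which $d^0_M(n_0)$ is bounded in terms of $a$ and $c$.

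The heart of the matter is therefore to control $d^0_M$ at one suitable degree above $a-1$ using $\varrho(M)=d^0_M(\reg^2(M))$. In the case $\dim(M)>1$: $\varrho(M)=d^0_M(\reg^2(M))$, and I would take $n_0 := a = \reg^2(M)$ — note $a = \reg^2(M) \ge a_2(M)+2 \ge \beg$-type estimates put it at least $a - 1$, and one checks $a \ge a-1$, but I actually need $n_0 > n_1$, i.e. $a > a-1$, which holds. So $\{(0,a),(1,a-1),\dots,(d-1,a-(d-1))\}$ is a genuine quasi-diagonal, and along it $d^0_M$ takes the bounded value $\varrho(M)\le c$ at $n_0=a$, while the entries at $i\ge 1$ vanish as above. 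By Proposition 4.2 this quasi-diagonal bounds cohomology in ${\mathcal C}_{a,c}$, so ${\mathcal C}_{a,c}$ is of finite (hence bounded) cohomology. In the case $\dim(M)\le 1$: here $d^0_M$ is a constant function (Remark 8.7(A)), equal to $\varrho(M)\le c$, so $d^0_M(n_0)\le c$ for \emph{every} $n_0$; pick $n_0$ large, e.g. $n_0 = a$ if $a>a-1$ works, or simply $n_0=a$ again, and the same quasi-diagonal argument applies (in fact when $\dim(M)\le 1$ one has $a_i(M)=-\infty$ for $i\ge 2$ so all higher entries vanish outright).

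The subtlety I expect to be the main obstacle is a bookkeeping one: making sure the chosen $n_0$ is \emph{strictly} larger than $n_1 = a-1$ while still being a degree at which $d^0_M$ is genuinely controlled by $\varrho$. Taking $n_0 = \reg^2(M) = a$ works because $a > a-1$; but I should double-check that $\varrho(M)=d^0_M(\reg^2(M))$ really is the relevant quantity at degree $a$ and not at some other degree — it is, by Notation 8.6. A second point requiring care: the subclass ${\mathcal C}_{a,c}$ also contains pairs with $\reg^2(M) < a$ (we only imposed $\le a$), so strictly speaking the position of the "$0$-th" entry varies with $M$; this is why I phrase the argument through Proposition 4.2 applied to the \emph{fixed} quasi-diagonal $\{(i,a-i)\}_{i=0}^{d-1}$, at which I must verify $d^0_M(a)\le c$ and $d^i_M(a-i)=0$ for $i\ge 1$ uniformly over ${\mathcal C}_{a,c}$. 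The vanishing for $i\ge 1$ is uniform from $\reg^2(M)\le a$. For $i=0$: if $\reg^2(M)=a$ exactly, $d^0_M(a)=\varrho(M)\le c$; if $\reg^2(M)=a'<a$, then $a > a'-1 \ge a_1(M)$ so $d^0_M(a)$ could in principle be nonzero, but $D^0_{R_+}(M)_n$ for $n$ large is eventually the Hilbert-polynomial-controlled piece — here I would instead argue that $\reg^2(M)\le a$ together with $\reg(M)$ considerations bound $d^0_M$ at \emph{all} $n \ge a$ in terms of $a$ and $c$; alternatively, and more cleanly, I would split ${\mathcal C}_{a,c}$ further by the exact value $\reg^2(M)=a'$ for each $a' \le a$ (finitely many relevant behaviours once combined with Remark 8.2(C) and a shift via Remark 3.7(B)), reducing to the equality case where the argument above is immediate. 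That reduction-to-equality-of-$\reg^2$ step, harmless as it is, is the one place where I'd be most careful to get the quantifiers right.
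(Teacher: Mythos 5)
Your core argument is essentially the paper's own proof: fix the exact values $\reg^2(M)=u$ and $\varrho(M)=v$, observe that $d^0_M(u)=v$ while $d^i_M(u-i)=h^{i+1}_M(u-i)=0$ for $1\le i\le d-1$ since $a_{i+1}(M)+i+1\le\reg^2(M)$, and conclude by the quasi-diagonal criterion (Proposition 4.2, in the paper via Theorem 5.4). The only criticism is your detour through the classes $\{\reg^2(M)\le a,\ \varrho(M)\le c\}$: Remark 5.2(C) does not apply to $\reg^2$ (it is not a positive invariant), and splitting that class by the exact value of $\reg^2$ produces infinitely many pieces, so it would not yield boundedness of the $\le$-class; but this detour is unnecessary, since Definition 5.1(B) only asks about the equality classes, for which your argument (the paper's) is complete.
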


\begin{proof} Fix $u, v \in {\mathbb Z}$ and set
\[ {\mathcal C}:= \{ (R, M) \in {\mathcal D}^d
\big\arrowvert \mbox{reg}^2(M) = u, \varrho (M) = v \}.\]

If $(R, M) \in {\mathcal C}$ we have $d^0_{M}(u) =
d^0_{M}(\mbox{reg} ^2(M)) = v$.

\smallskip

Let $i \in {\mathbb N}$. Then $u - i = \mbox{reg}^2(M) - i >a
_{i+1}(M)$ and hence $d^i_{M}(u - i) = h^{i+1}_M(u - i) = 0$.
Therefore $(R, M)$ belongs to the class
\[ {\mathcal D}:= \{ (R, M) \in {\mathcal D}^d \big
\arrowvert d^0_M(u) = v \mbox{ and } d^i_{M} (u - i) = 0 \mbox{
for all } i \in \{ 1, \cdots ,d-1 \} \}.\]

But according to Theorem 5.4 the class ${\mathcal D}$ is of bounded
cohomology.
\end{proof}

\smallskip

\begin{lem}\label{8.9 Lemma}
Let $(R, M)\in \D^d$ be such that $\dim(R/\p)\neq 1$ for all
$\p\in \Ass_R(M)$. Then
\[d^0_M(n-1)\leq \max\{0, d^0_M(n)-1\} \ \mbox{for all} \  n\in \Z.\]
\end{lem}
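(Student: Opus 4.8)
The plan is to reduce to a statement about the $R_+$-transform $D^0_{R_+}(M)$ and then use a filter-regular linear form. First I would dispose of trivial cases: if $M=0$ the claim is clear, and if $d^0_M(n)=0$ we must show $d^0_M(n-1)=0$ as well. So assume $d^0_M(n)>0$; then it suffices to prove the strict drop $d^0_M(n-1)\le d^0_M(n)-1$, i.e.\ that the map induced by multiplication by a suitable linear form on the graded pieces of $D^0_{R_+}(M)$ is not injective on degree $n$, unless that piece is already zero in degree $n-1$.

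Next I would perform the standard reductions: passing to $M/\Gamma_{R_+}(M)$ does not change $D^0_{R_+}(M)$ (by the exact sequence \ref{2.2}(B)(i) and \ref{2.2}(B)(ii) combined with $\Gamma_{R_+}$ being $R_+$-torsion), nor does it change the associated-prime hypothesis in the relevant way (the condition $\dim(R/\p)\ne 1$ passes to the quotient, since $\Ass_R(M/\Gamma_{R_+}(M))\subseteq\Ass_R(M)$), so I may assume $M$ is $R_+$-torsion free. I would also extend the base ring to make the residue field infinite, using graded base-ring independence of local cohomology. Then, because $M$ is $R_+$-torsion free and no associated prime of $M$ has $\dim(R/\p)=1$, a generic linear form $x\in R_1$ is filter-regular with respect to $M$ \emph{and} avoids all one-dimensional associated primes; the key consequence is that $M/xM$ has the property that $D^0_{R_+}(M)/xD^0_{R_+}(M)$ injects into $D^0_{R_+}(M/xM)$ in all sufficiently—actually all—degrees, or more precisely that $\Gamma_{R_+}$-issues coming from $M/xM$ are controlled. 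Concretely, from $0\to M(-1)\xrightarrow{x} M\to M/xM\to 0$ one gets
\[
D^0_{R_+}(M)_{n-1}\xrightarrow{x} D^0_{R_+}(M)_n \to D^0_{R_+}(M/xM)_n \to D^1_{R_+}(M)_{n-1},
\]
and the hypothesis on associated primes is exactly what forces $x$ to be a nonzerodivisor on $M$ and on $D^0_{R_+}(M)$, so the first map is injective; hence $d^0_M(n-1)\le d^0_M(n)$ always, and the drop is strict precisely when $x$ is not surjective onto $D^0_{R_+}(M)_n$, which holds whenever $D^0_{R_+}(M)_n\ne 0$ by graded Nakayama applied to the finitely generated graded $S$-module $D^0_{R_+}(M)$ over a suitable subring, provided that module is generated in degrees $\le n$ — and if $D^0_{R_+}(M)_n\ne 0$ but the module is generated in lower degrees then $x\cdot D^0_{R_+}(M)_{n-1}=D^0_{R_+}(M)_n$ would force, by iterating, a contradiction with $M$ being $R_+$-torsion free only if $D^0_{R_+}(M)_{n-1}\ne 0$, which is what we want.

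The main obstacle I anticipate is making the last surjectivity/strict-drop step precise: it is easy to get $d^0_M(n-1)\le d^0_M(n)$, but to get strictness exactly under the hypothesis "$\dim(R/\p)\ne1$ for all $\p\in\Ass_R(M)$" one must argue that a generic $x\in R_1$ cannot act surjectively from degree $n-1$ to degree $n$ on $D^0_{R_+}(M)$ unless the target vanishes. I would handle this by noting that $D^0_{R_+}(M)$ is, up to $R_+$-torsion, isomorphic to $M$ in high degrees and that $M$ is a submodule of a finite direct sum of $R/\p$'s shifted; the associated primes of $D^0_{R_+}(M)$ are again among those of $M$, none of dimension $1$, so $D^0_{R_+}(M)$ has no associated prime of coheight $1$, whence $x$ is a nonzerodivisor on it and $D^0_{R_+}(M)/xD^0_{R_+}(M)$ has dimension one less; finite length of each graded piece together with $x$ being regular then gives an injection $D^0_{R_+}(M)_{n-1}\hookrightarrow D^0_{R_+}(M)_n$ whose cokernel is nonzero as soon as the Hilbert function of the nonzero module $D^0_{R_+}(M)$ is not eventually constant in the relevant range — and the associated-prime hypothesis (no component of dimension exactly one) is precisely what rules out the constant behaviour, so wherever $D^0_{R_+}(M)_{n-1}\ne 0$ we get $d^0_M(n)\ge d^0_M(n-1)+1$, i.e.\ the desired inequality $d^0_M(n-1)\le\max\{0,d^0_M(n)-1\}$.
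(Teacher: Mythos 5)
Your reduction steps are fine (passing to $M/\Gamma_{R_+}(M)$, enlarging the residue field, and noting that $\Ass_R(D^0_{R_+}(M))\subseteq \Ass_R(M)\setminus \Var(R_+)$, so a generic $x\in R_1$ is a nonzerodivisor on $M$ and on $D^0_{R_+}(M)$; this gives the weak inequality $d^0_M(n-1)\le d^0_M(n)$). But the entire content of the lemma is the \emph{strict} drop, i.e.\ that $x\colon D^0_{R_+}(M)_{n-1}\to D^0_{R_+}(M)_n$ cannot be surjective when $D^0_{R_+}(M)_{n-1}\ne 0$, and your argument for this step does not hold up. Graded Nakayama presupposes that $D^0_{R_+}(M)$ is a finitely generated $R$-module, which is not available at this point: finite generation of $D^0_{R_+}(M)$ amounts to $H^1_{R_+}(M)$ having finite length, equivalently to $d^0_M(n)=0$ for $n\ll 0$ --- essentially a consequence of the inequality you are trying to prove (or of Faltings' finiteness criterion, which you never invoke). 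Even granting finite generation, genericity of $x$ only turns the assumed surjectivity into $D^0_{R_+}(M)_n=R_1\cdot D^0_{R_+}(M)_{n-1}$, which is no contradiction; and your fallback remark that ``the cokernel is nonzero as soon as the Hilbert function is not eventually constant in the relevant range'' is both unproved and weaker than what is needed, since the lemma is a degree-by-degree statement (a drop of at least one at \emph{every} $n$ with $D^0_{R_+}(M)_{n-1}\ne 0$), not an assertion about eventual behaviour. Note also that the statement really concerns $D^0_{R_+}(M)$ and not Hilbert functions of arbitrary torsion-free modules: a finitely generated module with all associated primes of coheight $\ge 2$ can perfectly well have $\dim_K T_{n-1}=\dim_K T_n\ne 0$, so some genuinely cohomological input is unavoidable.

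For comparison, the paper does not argue directly at all: it introduces the invariant $\lambda(N)=\inf\{\depth(N_\p)+\height((\p+R_+)/\p)\mid \p\in\Spec(R)\setminus\Var(R_+)\}$, observes that the hypothesis on $\Ass_R(M)$ forces $\lambda(M(n))=\lambda(M)>1$, and then quotes [8, Proposition 4.6], which is exactly the strict-drop inequality under the condition $\lambda>1$. So the missing step in your proposal is precisely the content of that cited result; to make your approach self-contained you would have to reprove it (for instance by establishing finite generation of $D^0_{R_+}(M)$ via Faltings' criterion and then running a genuine induction on dimension), none of which is sketched.
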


\begin{proof}
For an arbitrary finitely generated graded $R$-module $N$ let
\[\lambda(N):= \inf\{\depth(N_{\p})+ \height ((\p+R_+)/\p)\mid \ \p\in \Spec (R)\backslash \Var (R_+)\}.\]
Clearly, for all $n\in \Z$ we have $\lambda(N(n))= \lambda(N)$.
So, for all $n\in \Z$, we get by our hypotheses that
$\lambda(M(n))= \lambda(M)>1$. Now, according to [8, Proposition
4.6] we obtain
\[d^0_M(n-1)= d^0_{M(n)}(-1)\leq \max\{0,  d^0_{M(n)}(0)-1\}= \max\{0,  d^0_{M}(n)-1\}.\]
\end{proof}

\smallskip

\begin{thm}\label{8.10 Theorem}

Let $r,s \in {\mathbb Z}$ and let $p \in {\mathbb Q}[t]$ be a
polynomial. Let ${\mathcal C} \subseteq {\mathcal D}^d$ be the class
of all pairs $(R, M) \in {\mathcal D}^d$ satisfying the following
conditions:

\smallskip

\renewcommand{\descriptionlabel}[1]%
             {\hspace{\labelsep}\textrm{#1}}
\begin{description}
\setlength{\labelwidth}{13mm} \setlength{\labelsep}{1.3mm}
\setlength{\itemindent}{0mm}

\item[{$(\alpha )$}] There is a finitely generated graded R-module $N$ with
Hilbert polynomial
$ p_{N} = p$ and  $\reg^2
(N) \leq r $ such that $M \subseteq N.$

\smallskip

\item[$(\beta )$] ${\rm reg}^2(M) \leq s$.
\end{description}

\smallskip

Then, ${\mathcal C}$ is a class of finite cohomology.
\end{thm}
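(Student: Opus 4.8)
The plan is to reduce the statement to Theorem 5.8 (that $(\reg^2,\varrho)$ bounds cohomology in $\D^d$) by showing that, under the hypotheses $(\alpha)$ and $(\beta)$, the invariant $\varrho$ is automatically bounded on $\C$. Fix a pair $(R,M)\in\C$, together with an ambient module $N\supseteq M$ as in $(\alpha)$. The key observation is that for all $n\in\Z$ one has an exact sequence $0\to M\to N\to N/M\to 0$, hence a graded exact sequence $0\to H^0_{R_+}(M)\to H^0_{R_+}(N)\to\cdots$, and since $D^0_{R_+}$ is left exact we obtain an injection $D^0_{R_+}(M)\hookrightarrow D^0_{R_+}(N)$ of graded $R$-modules; therefore $d^0_M(n)\leq d^0_N(n)$ for every $n\in\Z$. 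In particular $\varrho(M)\leq d^0_M(\reg^2(M))\leq d^0_N(\reg^2(M))$, so it suffices to bound $d^0_N(m)$ for the relevant values of $m$.

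Next I would control $d^0_N$ using the two finiteness inputs on $N$: its Hilbert polynomial is $p$ and $\reg^2(N)\leq r$. From $\reg^2(N)\leq r$ we get $a_i(N)\leq r-i$ for all $i\geq 2$, so the modules $H^i_{R_+}(N)$ for $i\geq 2$ vanish in all degrees $>r-i$; combined with Serre's formula relating the Hilbert function $h_N(n)=\length_{R_0}(N_n)$ to the Hilbert polynomial $p(n)$ via the alternating sum of cohomological lengths $\sum_i(-1)^i h^i_N(n)$, one sees that for $n$ large the difference $p(n)-h_N(n)$ is governed only by $h^0_N(n)$ and $h^1_N(n)$, and in fact $d^0_N(n)=h_N(n)-h^0_N(n)+h^1_N(n)$ for $n>r-2$ by Reminder 2.2(i). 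Since $\reg^2(N)\leq r$ forces $D^0_{R_+}(N)_n$ to agree with a fixed polynomial-type expression for $n\gg0$ (namely $p(n)$) and $\reg^2(N)\leq r$ bounds where this polynomial behaviour starts, the value $d^0_N(m)$ is bounded by an explicit quantity depending only on $p$ and $r$ whenever $m\geq$ some bound depending on $r$. The remaining issue is that $\reg^2(M)$ — the degree at which we evaluate — may a priori be small; but $(\beta)$ gives $\reg^2(M)\leq s$, and we also always have $\reg^2(M)\geq\beg(\text{something})$; in any case $\reg^2(M)$ ranges over the finite (from below, using $\gendeg$ or the structure of $M$) — more carefully, I would invoke Lemma 5.9 together with the fact that $d^0_M$ is eventually the Hilbert polynomial of $D^0_{R_+}(M)$ to conclude that $d^0_M(\reg^2(M))$ is bounded in terms of $d^0_N$ at finitely many degrees lying in a bounded range determined by $r$ and $s$.

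More precisely, the clean route is: for $n>\reg^2(M)-2\geq$ we have by Lemma 5.9-type monotonicity (or directly, since above $\reg^2(M)$ the module $D^0_{R_+}(M)$ has no higher cohomology interfering) that $d^0_M$ is non-increasing as we move down from $\gendeg$-level until it stabilizes to its polynomial value; hence $d^0_M(\reg^2(M))\leq d^0_M(r)\leq d^0_N(r)$ once $\reg^2(M)\leq r$, and if $\reg^2(M)>r$ then $\reg^2(M)\leq s$ places it in the bounded window $[r+1,s]$, where $d^0_N$ takes only finitely many values as $N$ varies with fixed $p$ and $\reg^2(N)\leq r$ (the latter because $D^0_{R_+}(N)_n$ for $n>r$ equals the fixed polynomial value $p(n)$). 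Either way $\varrho(M)$ is bounded by a constant depending only on $p$, $r$, $s$. Therefore $\C$ is contained in a class of the form $\{(R,M)\in\D^d\mid \reg^2(M)\leq s,\ \varrho(M)\leq v\}$ for some $v$, which is a finite union over $u\leq s$ and $w\leq v$ of the classes $\{(R,M)\mid\reg^2(M)=u,\ \varrho(M)=w\}$; each of these is of bounded (equivalently, by Theorem 3.5, finite) cohomology by Theorem 5.8, and a finite union of classes of finite cohomology is again of finite cohomology. This proves $\C$ is of finite cohomology.

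The main obstacle I anticipate is the middle step: showing that $d^0_N(m)$ is uniformly bounded for $m$ in the window $[\,r+1,s\,]$ (and for $m\leq r$) purely from the data $p_N=p$ and $\reg^2(N)\leq r$. This requires knowing that $\reg^2(N)\leq r$ pins down $D^0_{R_+}(N)$ in degrees $>r$ to coincide with the Hilbert polynomial $p$, and that in lower degrees the length $d^0_N(m)$ cannot blow up — for this I would need a uniform bound on $\gendeg(D^0_{R_+}(N))$ or equivalently on $\reg^1(N)$ in terms of $r$ and $p$, which is where a result like [6, Theorem 5.4] or the boundedness machinery of Section 3 applied to the ambient modules $N$ may have to be brought in. If that uniform bound on the ambient $N$'s is not directly available, the fallback is to apply Theorem 5.8 first to the class of ambient pairs $(R,N)$ (which have $\reg^2\leq r$ and, since $p_N=p$ is fixed, also fixed $\varrho$-relevant data) to deduce that $\{d_N\mid(R,N)\text{ ambient}\}$ is finite, and then transfer finiteness to $M$ via the injection $D^0_{R_+}(M)\hookrightarrow D^0_{R_+}(N)$ together with $(\beta)$.
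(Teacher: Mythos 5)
Your opening move is sound and matches the paper's first step: by left exactness of $D^0_{R_+}$ the inclusion $M\subseteq N$ gives $d^0_M(n)\le d^0_N(n)$, and since $v:=\max\{r,s\}\ge \reg^2(N)$ one has $d^0_N(v)=p(v)$, whence $\varrho(M)=d^0_M(\reg^2(M))\le d^0_M(v)\le p(v)$; this is exactly the paper's inequality $(\ast)$, and your worries about a ``window'' $[r+1,s]$ and about $d^0_M$ being ``non-increasing as we move down from the $\gendeg$-level'' are unnecessary detours (and the monotonicity claim as you state it is not justified), but that part is repairable.

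The genuine gap is in your final reduction. You claim that $\{(R,M)\in\D^d\mid \reg^2(M)\le s,\ \varrho(M)\le v\}$ is a \emph{finite} union of classes $\{\reg^2(M)=u,\ \varrho(M)=w\}$ with $u\le s$, $w\le v$. It is not: $u$ ranges over all integers $\le s$ (together with $-\infty$), so this union is infinite, and Theorem 5.8 only gives bounded cohomology for each \emph{fixed} value of $\reg^2$ (note that $\reg^2$ is not a positive invariant, so Remark 5.2(C) does not let you replace ``$=u$'' by ``$\le s$''). The missing idea — and the actual core of the paper's proof — is a \emph{lower} bound for $\reg^2(M)$ on $\C$, which uses hypothesis $(\alpha)$ again: the paper proves that either $\dim(M)\le 1$ or $\reg^2(M)\ge -v-p(v)$. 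This is done by splitting off the low-dimensional part via $0\to H\to M\to \overline{M}\to 0$ with $\dim(H)\le 1$ and $\Ass_R(\overline{M})$ containing no primes $\p$ with $\dim(R/\p)\le 1$, so that $\reg^2(M)=\reg^2(\overline{M})$; then Lemma 5.9 gives $d^0_{\overline{M}}(n-1)\le\max\{0,d^0_{\overline{M}}(n)-1\}$, which together with $d^0_{\overline{M}}(v)\le p(v)$ forces $d^0_{\overline{M}}(n)=0$ for $n\le -v-p(v)-1$, and finally $\reg^2(M)=\reg(D^0_{R_+}(\overline{M}))\ge\gendeg(D^0_{R_+}(\overline{M}))\ge\beg(D^0_{R_+}(\overline{M}))\ge -v-p(v)$. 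Only with this lower bound does $\C$ decompose into $\C_{-\infty}$ (the $\dim\le 1$ part, handled directly) and \emph{finitely many} classes with $\reg^2=t$, $-v-p(v)\le t\le s$ and $\varrho\le p(v)$, to which Theorem 5.8 applies. Without such a bound your argument does not close, and no appeal to [6, Theorem 5.4] for the ambient modules $N$ substitutes for it, since the problem lies with $\reg^2(M)$, not with $N$.
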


\begin{proof} Let $v:= \max \{ r, s\} $. We first show that for each
pair $(R, M) \in {\mathcal C}$ we have
\[  \ \varrho(M)\leq p(v) \leqno{(\ast )}\]
 and
\[  \ \dim(M)\leq 1 \ \mbox{or} \ {\rm reg}^2(M) \geq  -v - p(v).\leqno{ (\ast \ast )}\]

So, let $(R, M) \in {\mathcal C}$. Then, there is a monomorphism
of finitely generated graded $R$-modules $M
\stackrel{\epsilon}{\rightarrowtail} N$ such that $p_{N} = p$ and
${\rm reg}^2(N) \leq r\leq  v$.

\smallskip

Assume first that $\dim(M)>1$. As ${\rm reg}^2(M) \leq v$ we then
get
\[ \varrho(M)= d^0_M({\rm reg}^2(M)) \leq d^0_{M} (v) \leq
d^0_{N} (v) = p_{N} (v) = p(v).\] If $\dim(M)\leq 1$, the
function $d^0_M$ is constant and therefore
\[ \varrho(M)=
d^0_M(0)= d^0_{M} (v)\leq d^0_{N} (v)= p_N(v)= p(v).\] Thus we
have proved statement $(\ast )$.

\smallskip

To prove statement $(\ast \ast)$ we assume that $\dim(M)>1$. Then
there is a short exact sequence of finitely generated graded $R$-
modules
\[0\longrightarrow H\longrightarrow M\longrightarrow \overline{M}\longrightarrow 0\]
such that $\dim(H)\leq 1$ and $\mbox{Ass}_R(\overline{M})$ does not
contain any prime $\p$ with $\dim(R/\p)\leq 1$. As $\dim(H)\leq
1$, we have $H^i_{R_+}(H)=0$ for all $i>1$. Therefore
$H^i_{R_+}(M)\cong H^i_{R_+}(\overline{M})$ for all $i>1$ and hence
$\reg ^2 (M)= \reg ^2 (\overline{M})$. Moreover by the observation
made on $\mbox{Ass}_R(\overline{M})$, we have (s. Lemma 5.9)
\[d^0_{\overline{M}}(n-1)\leq \max \{0, d^0_{\overline{M}}(n)-1\} \ \mbox{for all} \
n \in \Z.\]
As $D^1_{R_+}(H)= H^2_{R_+}(H)=0$, we have
\[d^0_{\overline{M}}(v)\leq d^0_M(v)\leq d^0_N(v)= p_N(v)= p(v)\]
and it
follows that
\[d^0_{\overline{M}}(n)=0 \ \mbox{for all} \ n\leq -v-p(v)-1.\]
One consequence of this is, that $T:= D^0_{R_+}(\overline{M})$ is a
finitely generated $R$-module. As $H^i_{R_+}(M)\cong
H^i_{R_+}(\overline{M}) $ for all $i>1$, we have $\reg ^2(T)= \reg
^2(\overline{M})= \reg ^2(M)$. As $H^i_{R_+}(T)=0$ for $i= 0, 1$, we thus
get $ \reg ^2(M)=  \reg (T)$. As $T_n= 0$ for all $n\leq -v-p(v)-1$,
we finally obtain (s. Reminder 2.2(E))
\[\reg ^2(M)=  \reg
(T)\geq \gendeg(T)\geq \beg(T)\geq -v-p(v).\] This proves
statement $(\ast \ast )$.

\smallskip

Now, we may write
\[ {\mathcal C} \subseteq {\mathcal C}_{-\infty} \cup \bigcup ^s_{t =
 -v - p(v)} {\mathcal C}_t ,\]
where
\[{\mathcal C}_{-\infty}:=  \{ (R, M) \in {\mathcal D}^d
\big\arrowvert \dim(M)\leq 1 \ \mbox{and} \  \varrho(M)  \leq
p(v)\}\]
and, for all $t \in {\mathbb Z}$ with $-v - p(v)\leq t \leq s$,
\[ {\mathcal C}_t:= \{ (R, M) \in {\mathcal D}^d \big\arrowvert
{\rm reg}^2(M) = t, \ \varrho(M) \leq p(v)\}.\]

The class ${\mathcal C}_{-\infty}$ clearly is of bounded
cohomology.

Now, by Remark 5.2(C) and by Corollary 5.8, each of the classes
${\mathcal C}_t$ is of bounded cohomology. This proves our claim.
\end{proof}

\begin{cor}\label{8.10 Corollary}

\smallskip

Let $r \in {\mathbb Z}$ and let $p \in {\mathbb Q}[t]$ be a
polynomial. Let ${\mathcal C} \subseteq {\mathcal D}^d$ be the
class of all pairs $(R, M) \in {\mathcal D}^d$ satisfying the
condition $(\alpha )$ of Theorem 5.10. Then, the invariant ${\rm
reg}^2$ bounds cohomology in the class ${\mathcal C}$.
\end{cor}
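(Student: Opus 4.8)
The plan is to reduce the corollary to Theorem 5.10 by constructing a suitable auxiliary bound $s$. Fix $r \in \mathbb{Z}$ and $p \in \mathbb{Q}[t]$, and fix $w \in \mathbb{Z}$; we must show that the class $\mathcal{C}_w := \{(R,M) \in \mathcal{C} \mid \mathrm{reg}^2(M) = w\}$ is of bounded (equivalently finite) cohomology, since $\mathrm{reg}^2$ bounds cohomology in $\mathcal{C}$ if and only if for every value $w \in \mathbb{Z} \cup \{\pm\infty\}$ the corresponding fiber is of bounded cohomology. For $w = \pm\infty$ one handles the fibers separately: the fiber over $-\infty$ consists of pairs with $\mathrm{reg}^2(M) = -\infty$, forcing $\dim(M) \leq 1$, and then $d^0_M$ is constant and bounded above by $d^0_N(v) = p(v)$ (with $v$ as in Theorem 5.10 but now only depending on $r$), so this fiber is of bounded cohomology; the fiber over $+\infty$ is empty since $\mathrm{reg}^2$ is a finite invariant on $\mathcal{D}^d$ by Reminder 5.5 and Remark 5.6 (D)—indeed $\mathrm{reg}^2(M) < \infty$ always. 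Thus only the integer values $w$ matter.

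For a fixed integer $w$, I would simply observe that $\mathcal{C}_w \subseteq \mathcal{C}'$, where $\mathcal{C}'$ is the class of Theorem 5.10 built from the same data $p$ and $r$ together with the choice $s := w$; that is, every pair $(R,M) \in \mathcal{C}_w$ satisfies condition $(\alpha)$ (which is exactly condition $(\alpha)$ of the corollary) and condition $(\beta)$ with this $s$, namely $\mathrm{reg}^2(M) = w \leq w = s$. By Theorem 5.10 the class $\mathcal{C}'$ is of finite cohomology, hence of bounded cohomology by Theorem 3.5, and therefore so is the subclass $\mathcal{C}_w$ by Remark 3.4 (A)(a). This shows each integer fiber of $\mathrm{reg}^2$ on $\mathcal{C}$ is of bounded cohomology.

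Combining the three cases, for every $n \in \mathbb{Z} \cup \{\pm\infty\}$ the class $\{(R,M) \in \mathcal{C} \mid \mathrm{reg}^2(M) = n\}$ is of bounded cohomology, which is precisely the statement that the single invariant $\mathrm{reg}^2$ bounds cohomology in $\mathcal{C}$ in the sense of Definition 5.1 (B). The only point requiring a little care—rather than a genuine obstacle—is the treatment of the value $-\infty$, where Theorem 5.10 does not directly apply because its hypothesis $(\beta)$ presupposes an integer bound $s$; here one falls back on the elementary fact (Remark 5.6 (A) and the first paragraph of the proof of Theorem 5.10) that $\dim(M) \leq 1$ forces $d^0_M$ constant with $\varrho(M) = d^0_M(0) \leq p(v)$ and $d^i_M \equiv 0$ for $i \geq 2$ while $d^1_M(n) = h^2_M(n) = 0$, so the whole cohomology table is controlled by the single bounded value $d^0_M(0)$. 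With this the corollary follows.
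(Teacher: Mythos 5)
Your argument is correct and is essentially the paper's (whose proof is simply ``immediate by Theorem 5.10''): for each integer value $w$ the fiber $\{(R,M)\in\mathcal{C}\mid \reg^2(M)=w\}$ lies in the class of Theorem 5.10 with $s:=w$, hence is of finite, so bounded, cohomology, while the fiber over $+\infty$ is empty. Note only that your separate treatment of the fiber over $-\infty$ is unnecessary: condition $(\beta)$ reads $\reg^2(M)\leq s$, which is trivially satisfied when $\reg^2(M)=-\infty$ (take, e.g., $s=r$), so this fiber is also directly contained in a class covered by Theorem 5.10.
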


\begin{proof}
This is immediate by Theorem 5.10.
\end{proof}

\begin{cor}\label{8.11 Corollary}

Let $r \in {\mathbb Z}$ and let $(R, N) \in {\mathcal D}^d$. If $M$ runs through all graded
 submodules $M \subseteq N$ with
${\rm reg}^2 (M) \leq r$, only finitely many cohomology tables
$d_{M}$ and hence only finitely many Hilbert polynomials $p _{M}$
occur.
\end{cor}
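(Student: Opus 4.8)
The plan is to deduce Corollary 5.14 directly from Theorem 5.10, by exhibiting the class of submodules $M \subseteq N$ with $\reg^2(M) \le r$ as a subclass of the class $\mathcal{C}$ appearing in that theorem, applied to the fixed ambient module $N$. First I would fix the pair $(R,N) \in \mathcal{D}^d$ and set $p := p_N$, the Hilbert polynomial of $N$, and $s := r$. Let $\mathcal{C}$ be the class of all pairs $(R', M) \in \mathcal{D}^d$ satisfying conditions $(\alpha)$ and $(\beta)$ of Theorem 5.10 with respect to this choice of $p$ and $s$; note we also need some integer bound in condition $(\alpha)$, namely we take $r$ there to be $\reg^2(N)$, which is a well-defined integer since $N$ is finitely generated.

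Next I would observe that every graded submodule $M \subseteq N$ with $\reg^2(M) \le r$ gives a pair $(R,M)$ lying in $\mathcal{C}$: indeed $\dim(M) \le \dim(N) \le d$ so $(R,M) \in \mathcal{D}^d$; condition $(\alpha)$ holds with the ambient module taken to be $N$ itself, since $p_N = p$ and $\reg^2(N) \le \reg^2(N)$ and $M \subseteq N$; and condition $(\beta)$ holds because $\reg^2(M) \le r = s$. Hence the class $\{(R,M) \mid M \subseteq N \text{ graded submodule}, \ \reg^2(M) \le r\}$ is a subclass of $\mathcal{C}$. By Theorem 5.10, $\mathcal{C}$ is a class of finite cohomology, and by Remark 3.4(A) (finite cohomology passes to subclasses) the smaller class is of finite cohomology too. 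This gives that only finitely many cohomology tables $d_M$ occur.

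For the final assertion about Hilbert polynomials, I would recall the standard relation between the Hilbert polynomial of $M$ and its cohomology table: one has $p_M(n) = \sum_{i \ge 0} (-1)^i h^i_M(n)$ for $n \gg 0$, or equivalently the Hilbert polynomial is determined by the Grothendieck--Serre correspondence between the Hilbert function and the cohomological Hilbert functions $h^i_M = d^i_M$ for $i \ge 1$ together with $h^0_M$, all of which are recoverable from $d_M$ (using the exact sequence and isomorphisms of Reminder 2.2(B) that tie $h^i_M$ to $d^{i-1}_M$). Since only finitely many tables $d_M$ occur, only finitely many Hilbert polynomials $p_M$ can occur.

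I do not expect any serious obstacle here; the corollary is a routine specialization. The one point requiring a small remark is the passage from finiteness of cohomology tables to finiteness of Hilbert polynomials, which uses that $p_M$ is a polynomial invariant computable from the eventual values of the cohomology table (for $n \gg 0$ only $h^0_M$ contributes, and $h^0_M(n) = \dim_{R_0/\ldots} M_n$ agrees with $p_M(n)$ for $n \gg 0$ once we pass to $M/\Gamma_{R_+}(M)$, whose Hilbert polynomial equals that of $M$); alternatively one simply invokes that the family $(d^i_M)_{i}$ determines $p_M$, a fact already used implicitly in the passage to Theorem 5.10. Everything else is bookkeeping with the definitions of $\mathcal{D}^d$, $(\alpha)$, and $(\beta)$.
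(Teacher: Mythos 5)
Your proposal is correct and takes essentially the same route as the paper, whose proof is just the citation of Theorem 5.10 applied with $p=p_N$, the bound in condition $(\alpha)$ taken to be (an integer at least) $\reg^2(N)$, and $s=r$, followed by passing to the subclass of pairs $(R,M)$ with $M\subseteq N$. Two cosmetic slips that do not affect the argument: $\reg^2(N)$ can be $-\infty$ when $\dim(N)\leq 1$, so one should take any integer majorizing it rather than calling it ``a well-defined integer''; and the Serre--Grothendieck relation you want is $p_M(n)=\sum_{i\geq 0}(-1)^i d^i_M(n)$ for all $n$ (equivalently $\length_{R_0}(M_n)-p_M(n)=\sum_{i\geq 0}(-1)^i h^i_M(n)$), not $p_M(n)=\sum_i(-1)^i h^i_M(n)$ for $n\gg 0$, which would force $p_M=0$.
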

\begin{proof} This is clear by Theorem 5.10.
\end{proof}

\smallskip

\begin{cor}\label{8.12 Corollary}
Let $r\in \Z$ and let $(R, N)\in \D^d$. If $M$ runs through all
graded submodules of $N$ with $\reg^1(M)\leq r$ only finitely many
families
\[(h^i_M(n))_{(i, n)\in {\N}_0\times \Z} \ \ \
\mbox{and} \ \ \ (h^i_{N/M}(n))_{(i, n)\in
{\N}_0\times \Z}\] can occur.
\end{cor}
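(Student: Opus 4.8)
The plan is to reduce Corollary 5.14 to Corollary 5.13 together with the already-established relation between local cohomology of the first kind, local cohomology of the second kind, and the ideal transform. First I would observe that $\reg^1(M)\le r$ means precisely that $a_i(M)+i\le r$ for all $i\ge 1$; in particular $a_1(M)\le r-1$, so $H^1_{R_+}(M)_n=0$ for all $n>r-1$, and more generally $h^i_M(n)=0$ for all $i\ge 1$ and all $n>r-i$. Since $\reg^1(M)\ge\reg^2(M)$, the hypothesis $\reg^1(M)\le r$ implies $\reg^2(M)\le r$, so Corollary 5.13 already tells us that only finitely many cohomology tables $d_M$ occur as $M$ runs through the graded submodules of $N$ with $\reg^1(M)\le r$. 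Because $h^i_M(n)=d^i_M(n)$ for all $i\ge 2$ and all $n$ (by the isomorphisms \ref{2.2}(B)(ii)), this already pins down all $h^i_M(n)$ for $i\ge 2$ to finitely many possibilities.

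The remaining issue is to control $h^0_M(n)$ and $h^1_M(n)$. For $h^1_M$, I would use the exact sequence \ref{2.2}(B)(i): since $h^1_M(n)$ is determined by $d^0_M(n)$, by $(M/H^0_{R_+}(M))_n = M_n/H^0_{R_+}(M)_n$, and by the map $M_n\to D^0_{R_+}(M)_n$, and since $h^1_M(n)=0$ for $n>r-1$ while for $n\le r-1$ the module $H^1_{R_+}(M)_n$ is a subquotient controlled by $d_M$ plus the graded pieces of $M$. The cleaner route: $h^0_M(n)+\ell(M_n)-h^1_M(n)=d^0_M(n)$ whenever $M_n$ has finite length appropriately — more precisely, from (i) we get $\ell_{R_0}(H^0_{R_+}(M)_n)-\ell_{R_0}(M_n)+\ell_{R_0}(D^0_{R_+}(M)_n)-\ell_{R_0}(H^1_{R_+}(M)_n)=0$ for all $n$, i.e. $h^0_M(n)-h^1_M(n)=\ell_{R_0}(M_n)-d^0_M(n)$. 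Now $\ell_{R_0}(M_n)=d^0_M(n)-h^1_M(n)+h^0_M(n)$ is itself only known up to the Hilbert polynomial, which by Corollary 5.13 lies in a finite set; and for $n\gg 0$ both $h^0_M$ and $h^1_M$ vanish, so $\ell_{R_0}(M_n)=d^0_M(n)=p_M(n)$ is determined by $d_M$. For the finitely many $n$ in the relevant bounded range the individual values $h^0_M(n)$, $h^1_M(n)$ are then each bounded: $h^1_M(n)=0$ for $n>r-1$, and for $n\le r-1$ one bounds $H^1_{R_+}(M)$ using that $M\subseteq N$ forces $h^1_M(n)$ to be controlled by invariants of $N$ (for instance via the long exact sequence of $0\to M\to N\to N/M\to 0$, giving $h^1_M(n)\le h^0_{N/M}(n)+h^1_N(n)$, and $h^0_{N/M}(n)$ is bounded by $\ell(N_n)$, hence by the fixed Hilbert function of $N$). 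Once $h^1_M$ is bounded in the finite range and zero outside it, $h^0_M(n)=\ell_{R_0}(M_n)-d^0_M(n)+h^1_M(n)$ is determined up to finitely many choices as well. This handles the family $(h^i_M(n))$.

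For the family $(h^i_{N/M}(n))$, I would apply the long exact cohomology sequence of $0\to M\to N\to N/M\to 0$: it gives, for each $i\ge 1$, an exact piece $H^i_{R_+}(N)_n\to H^i_{R_+}(N/M)_n\to H^{i+1}_{R_+}(M)_n\to H^{i+1}_{R_+}(N)_n$, and for $i=0$ a piece $N_n/M_n\to H^0_{R_+}(N/M)_n$ together with $H^0_{R_+}(N/M)_n\to H^1_{R_+}(M)_n$. Since the cohomology modules of $N$ are fixed (so all $h^i_N(n)$ are constants), and since we have just bounded all $h^i_M(n)$ to finitely many possibilities, additivity of length along these exact sequences bounds every $h^i_{N/M}(n)$ to finitely many values too; note that $\dim(N/M)\le d$ so $(R,N/M)\in\D^d$ and these are again eventually-vanishing cohomological Hilbert functions of finite type, so only finitely many whole families can occur.

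The main obstacle I anticipate is the bookkeeping at cohomological level $\le 1$: the cohomology table $d_M$ records only $d^0_M=h^0_M/H^1_{R_+}$-mixed data, not $h^0_M$ and $h^1_M$ separately, so the crux is to recover these two separately from $d_M$ together with the fixed data of $N$. The key leverage is exactly the inclusion $M\subseteq N$ (used via $0\to M\to N\to N/M\to 0$), which is what lets us bound $h^1_M$ (hence $h^0_M$), and it is the only place where the hypothesis "$M$ is a submodule of the fixed $N$" — as opposed to merely "$\reg^1(M)\le r$" — is genuinely needed for the first-kind functions.
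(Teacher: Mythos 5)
Your overall route is the same as the paper's (reduce to the preceding corollary on the tables $d_M$ via $\reg^2(M)\le\reg^1(M)\le r$, then convert second--kind into first--kind data using Reminder 2.2(B) and the sequence $0\to M\to N\to N/M\to 0$), but there is a genuine gap exactly at the conversion step for $h^1$. You assert that $h^1_M$ is ``bounded in the finite range and zero outside it''. That is false: $\reg^1(M)\le r$ only bounds $\en(H^1_{R_+}(M))$, and $h^1_M(n)$ may be nonzero for \emph{all} $n\ll 0$ (take $M=N$ of dimension $\ge 2$ with $H^1_{R_+}(N)_n\ne 0$ for all $n\ll 0$, e.g.\ the coordinate ring of an elliptic curve regarded in $\D^d$). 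So on the infinite range $n\le r-1$ all you produce is a degreewise bound $h^1_M(n)\le h^0_{N/M}(n)+h^1_N(n)$, and degreewise bounds at infinitely many degrees do not yield the assertion of the corollary: infinitely many distinct families $(h^i_M(n))$ could sit below one fixed bounding function, so ``each value lies in a finite set'' does not give ``only finitely many families occur''. The missing idea is that on the negative tail the values must be pinned \emph{exactly} by data already known to take finitely many values: for $n<\beg(N)$ one has $M_n=0$, so 2.2(B)(i) gives $h^0_M(n)=0$ and $h^1_M(n)=d^0_M(n)$; together with $h^1_M(n)=0$ for $n\ge r$, $h^1_M(n)\le d^0_M(n)$ on the finite range $\beg(N)\le n<r$, $h^0_M(n)\le h^0_N(n)$, and the finiteness of the set of tables $d_M$, this is what forces the set of families $(h^i_M(n))$ to be finite. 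Your own identity $h^0_M(n)-h^1_M(n)=\length_{R_0}(M_n)-d^0_M(n)$ contains this fact (set $M_n=0$), but you never draw that conclusion and instead rely on the false finite-support claim.

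The same defect recurs for $N/M$: the estimates $h^0_{N/M}(n)\le h^0_N(n)+h^1_M(n)$ and $h^i_{N/M}(n)\le h^i_N(n)+h^{i+1}_M(n)$ are again only pointwise, and the closing inference ``eventually-vanishing cohomological Hilbert functions of finite type, so only finitely many whole families can occur'' is not valid for $n\to-\infty$. What the paper does here, and what your sketch omits, is a genuine extra step: from the pointwise bounds one first gets finiteness of the set of diagonals $(d^i_{N/M}(-i))_{i=0}^{d-1}$, then invokes [6, Theorem 5.4] (equivalently Theorem 3.5) to conclude that only finitely many tables $d_{N/M}$ occur, and only then recovers the first-kind families via $h^{i+1}_{N/M}=d^i_{N/M}$ for $i\ge 1$, the equality $h^1_{N/M}(n)=d^0_{N/M}(n)$ for $n<\beg(N)$, the bound on $\en(H^1_{R_+}(N/M))$, and the fact that $h^0_{N/M}$ is supported in a fixed finite range. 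So the skeleton of your reduction is right, but as written the proof establishes only bounded (pointwise) behaviour of the first-kind functions, not the finiteness of the set of families claimed in the corollary; repairing it requires precisely the exact identifications on the tail and the appeal to [6, Theorem 5.4] that the paper uses.
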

\begin{proof}
Let $\X$ be the set of all graded submodules $M\subseteq N$ with
$\reg^1(M)\leq r$.

\smallskip

Now, for each $M\in \X$ we have the following three relations
\[d^i_M(n)= h^{i+1}_M(n) \ \mbox{for all} \ i\geq 1 \ \mbox{and all} \ n\in \Z;\]
\[\begin{cases}h^1_M(n)\leq d^0_M(n) \ &\mbox{for all} \ n\in \Z;\\
h^1_M(n)= d^0_M(n) \ &\mbox{for all} \ n< \beg(N);\\ \
h^1_M(n)= 0  \ &\mbox{for all} \ n\geq r\end{cases}\]
and
\[h^0_M(n)\leq h^0_N(n) \ \mbox{for all} \ n\in \Z.\]
So, by Corollary 5.12 the set
\[\mathcal{U}:=\{(h^i_M(n))_{(i, n)\in {\N}_0\times \Z}\mid \ M\in \X\}\]
is finite.

\smallskip

For each $M\in \X$ the short exact sequence
$0\longrightarrow M\longrightarrow N\longrightarrow
N/M\longrightarrow 0$ yields that for all $n\in \Z$ and all $i\in
{\N}_0 $
\[\hskip5.7cm h^0_{N/M}(n)\leq h^0_{N}(n)+ h^1_{M}(n),\hskip5.6cm(1)\]
\[\hskip5.7cm d^i_{N/M}(n)\leq d^i_{N}(n)+ h^{i+2}_{M}(n). \hskip5.6cm(2)\]
By the finiteness of $\mathcal{U}$ it follows that the set of
functions
\[\mathcal{U}_0:=\{(h^0_{N/M}(n))_{n\in \Z}\mid \ M\in \X\}\]
is finite and that the set of cohomology diagonals
\[\mathcal{W}:=\{(d^i_{N/M}(-i))_{i=0}^{d-1}\mid \ M\in \X \}\]
is finite.

\smallskip

In view of Theorem [6, Theorem 5.4] the finiteness of
$\mathcal{W}$ implies that the set
\[\mathcal{U}_1:=\{(d^i_{N/M}(n))_{(i, n)\in {\N}_0\times \Z}\mid \ M\in \X\}\]
is finite. Moreover for all $M\in \X$ we have
\[\en(H^1_{R_+}(N/M))< \reg^1(N/M)\leq \max\{\reg^2(M)-1, \reg^2(N)\}\leq \max\{r-1, \reg^1(N)\};\]
\[ \ h^1_{N/M}(n)\leq d^0_{N/M}(n) \ \mbox{for all} \ n\in \Z, \ \mbox{with equality if} \ n< \beg(N).\]
As $d^i_{N/M}\equiv h^{i+1}_{N/M}$ for all $i>0$ the finiteness of
$\mathcal{U}_0$ and $\mathcal{U}_1$ shows that the set
\[\{(h^i_{N/M}(n))_{(i, n)\in
{\N}_0\times \Z}\mid \ M\in \X\}\]
 is finite, too.
\end{proof}

\smallskip

\begin{cor}\label{8.14 Corollary}
Assume that $R$ is a homogeneous Noetherian Cohen-Macaulay ring with Artinian local
base ring $R_0$. Let $s\in \Z$ and let $N$ be a finitely
generated graded $R$-module. If $M$ runs trough all
graded submodules of $N$ with $\gendeg(M)\leq s$ only finitely many
families
\[(h^i_{M}(n))_{(i, n)\in {\N}_0\times \Z} \ \ \  \mbox{and} \ \ \  (h^i_{N/M}(n))_{(i, n)\in {\N}_0\times \Z}\]
may occur.
\end{cor}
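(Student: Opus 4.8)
The plan is to reduce Corollary~5.15 to Corollary~5.13 by comparing the invariant $\gendeg$ with $\reg^1$. The key fact is that for a finitely generated graded module over a Cohen-Macaulay homogeneous ring $R$ with Artinian local base ring $R_0$, the Castelnuovo-Mumford regularity of a submodule can be controlled once we bound its generating degree and have information about the ambient module $N$. First I would fix $s \in \Z$ and let $\mathcal{X}$ denote the set of all graded submodules $M \subseteq N$ with $\gendeg(M) \leq s$; the aim is to produce some $r \in \Z$, depending only on $R$, $N$ and $s$, such that $\reg^1(M) \leq r$ for every $M \in \mathcal{X}$, and then invoke Corollary~5.13.

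Next I would establish the bound $\reg^1(M) \leq r$. Since $\gendeg(M) \leq s$, there is a graded free module $F = \bigoplus_{j} R(-a_j)$ with all $a_j \leq s$ and a graded epimorphism $F \twoheadrightarrow M$, hence a graded epimorphism $F \twoheadrightarrow N$ composing with the inclusion; but more directly one uses that $M$ is generated in degrees $\leq s$ together with the Cohen-Macaulay hypothesis on $R$ to control $a_i(M)$ for $i \geq 1$. The standard tool here is that over a Cohen-Macaulay $R$ one has effective bounds of the shape $a_i(M) + i \leq \gendeg(M) + c$ for $i \geq 1$, where $c$ depends on $R$ alone (for instance via the relation $H^i_{R_+}(M) = H^i_{R_+}$ applied to a presentation and the fact that $R$ itself has finite regularity $\reg(R) = \dim R - \depth R = 0$ in the standard graded Cohen-Macaulay case, or more carefully through a Koszul-complex argument); combined with $M \subseteq N$, which gives $H^0_{R_+}(M) \subseteq H^0_{R_+}(N)$ and hence $a_0(M) \leq a_0(N) < \infty$, one concludes $\reg^1(M) = \sup\{a_i(M) + i \mid i \geq 1\} \leq r$ for a suitable $r = r(R, N, s)$. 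The main obstacle is making this regularity bound precise and uniform over all of $\mathcal{X}$: one must be careful that the bound on $a_i(M)$ for $i \geq 1$ genuinely depends only on $\gendeg(M)$ and on intrinsic data of $R$ (its dimension and the regularity of $R$ as a module over itself), not on the individual $M$; the Cohen-Macaulay hypothesis is exactly what makes $\reg(R)$ finite and computable, so that a mapping-cone or presentation argument yields the desired uniform estimate.

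Once $r$ is found, the conclusion is immediate: every $M \in \mathcal{X}$ satisfies $\reg^1(M) \leq r$, so by Corollary~5.13 only finitely many families $(h^i_M(n))_{(i,n)}$ and $(h^i_{N/M}(n))_{(i,n)}$ can occur. Thus I would structure the write-up as: (i) fix $s$, introduce $\mathcal{X}$; (ii) prove the uniform bound $\reg^1(M) \leq r$ for $M \in \mathcal{X}$ using the Cohen-Macaulayness of $R$ and the inclusion $M \hookrightarrow N$; (iii) apply Corollary~5.13 to finish. I expect step (ii) to be the only real content, and within it the delicate point is the uniform control of the top local cohomology end $a_{\dim M}(M)$ — here one uses that $\dim M \leq \dim N$ is bounded and that a module generated in degrees $\leq s$ over a Cohen-Macaulay ring has its $i$-th local cohomology vanishing above degree roughly $s + \dim R - i$, a statement that follows from chasing a free presentation through the local cohomology functors and using $\reg(R) = 0$.
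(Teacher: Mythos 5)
Your overall reduction is the same as the paper's: produce a uniform bound $\reg^1(M)\leq r=r(R,N,s)$ for all graded submodules $M\subseteq N$ with $\gendeg(M)\leq s$, and then quote Corollary 5.13. The gap is in how you propose to get that bound. The ``standard tool'' you invoke --- an estimate of the shape $a_i(M)+i\leq \gendeg(M)+c$ for all $i\geq 1$ with $c$ depending on $R$ alone, obtained by chasing a free presentation through local cohomology and using finiteness of $\reg(R)$ --- is false. Already for $R=N=K[x,y]$ and $M=(x^d,y^d)$ one has $\gendeg(M)=d$ but $H^1_{R_+}(M)\cong R/M$ up to degree shift, so $a_1(M)=2d-2$, which is not of the form $\gendeg(M)+c(R)$. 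The reason the presentation chase fails is structural: from a surjection $F_0\twoheadrightarrow M$ with $F_0$ generated in degrees $\leq s$ you only control the top local cohomology of $M$ (by right-exactness of $H^{\dim}_{R_+}$); for the intermediate $H^i_{R_+}(M)$ the exact sequence brings in $H^{i+1}$ of the syzygy module, whose degrees are not controlled by $\gendeg(M)$. Likewise the inclusion $M\hookrightarrow N$ only controls $H^0$. So neither ingredient, nor their combination by soft arguments, bounds $a_i(M)$ for $0<i<\dim(M)$. (Two smaller inaccuracies: a Cohen--Macaulay homogeneous ring need not have $\reg(R)=0$ --- a hypersurface of degree $e$ has regularity $e-1$ --- and $\reg(R)<\infty$ holds for every Noetherian homogeneous ring with Artinian base, so Cohen--Macaulayness is not ``what makes $\reg(R)$ finite''.)

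What actually closes this step is a genuinely nontrivial theorem: the paper cites [4, Proposition 6.1] (Brodmann--G\"otsch), which bounds $\reg(M)$ in terms of $\gendeg(M)$, $\reg(N)$, $\beg(N)$, $\dim(R)$, the multiplicity $e_0(R)$ and the minimal number of generators of $N$; the Cohen--Macaulay hypothesis on $R$ is there precisely so that this result applies, not to make $\reg(R)$ vanish. So your step (iii) and the framing of step (ii) match the paper, but step (ii) as argued would not survive scrutiny: you must either quote [4, Proposition 6.1] (or the related result [3] on regularity of graded submodules versus generating degree) or reproduce an argument of comparable depth; a presentation/Koszul chase does not suffice.
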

\begin{proof}
By [4, Proposition 6.1] we see that $\reg(M)$ finds an upper
bound in terms of $\gendeg(M)$, $\reg(N)$, $\reg(R)$, $\beg(N)$,
$\dim(R)$, the multiplicity $e_0(R)$ of $R$ and the minimal number
of homogeneous generators of the $R$-module $N$. Now, we conclude
by Corollary 5.13.
\end{proof}

\smallskip

\begin{rem}\label{8.15 Remark}
\rm{If we apply Corollary 5.13 in the special case where $N= R=
K[x_1, \cdots, x_r]$ is a polynomial ring over a field, we get
back the finiteness result [17, Corollary 14]. Correspondingly,
if we apply Corollary 5.14 in this special case, we get back [17,
Corollary 20].}
\end{rem}

\vskip 1 cm

{\bf Acknowledgment.}   {\rm The third author would like to thank the
Institute of Mathematics of University of Z\"urich for financial
support and hospitality during the preparation of this paper.}

\medskip

\renewcommand{\descriptionlabel}[1]%
             {\hspace{\labelsep}\textrm{#1}}
\begin{description}
\setlength{\labelwidth}{12mm} \setlength{\labelsep}{1.3mm}
\setlength{\itemindent}{0mm} {\rm

\item [{[1]}] D. Bayer and D. Mumford, \emph{What can be computed in
algebraic geometry?} in "Computational Algebraic Geometry and
Commutative Algera" Proc. Cortona 1991 (D.Eisenbud and L.
Robbiano, Eds.) Cambridge University Press (1993) 1-48.

\item [{[2]}]  M. Brodmann,
\emph{Cohomological invariants of coherent sheaves over
projective schemes - a survey}, in ''Local Cohomology and its
Applications'' (G. Lyubeznik, Ed), 91 - 120, M. Dekker Lecture
Notes in Pure and Applied Mathematics {\bf226} (2001).

\item [{[3]}] M. Brodmann, \emph{Castelnuovo-Mumford regularity and degrees of generators of graded submodules},
 Illinois J. Math. {\bf47}, no. 3 (Fall 2003).

\item [{[4]}] M. Brodmann and T. G\"otsch, \emph{Bounds for the
Castelnuovo-Mumford regularity}, to appear in Journal of
Commutative Algebra.

\item [{[5]}]
M. Brodmann, M. Hellus, {\em Cohomological patterns of coherent
sheaves over projective schemes}, Journal of Pure and Applied
Algebra {\bf 172} (2002), 165-182.

\item [{[6]}] M. Brodmann, M. Jahangiri and C. H. Linh, \emph{Castelnuovo-Mumford regularity of deficiency modules},
 Preprint 2009.

\item [{[7]}]
M. Brodmann, F. A. Lashgari, {\em A diagonal bound for
cohomological postulation numbers of projective schemes}, J.
Algebra {\bf 265} (2003), 631-650.

\item [{[8]}]
M. Brodmann, C. Matteotti and N. D. Minh, {\em Bounds for
cohomological Hibert functions of   projective schemes over Artinian
rings}, Vietnam Journal of Mathematics {\bf 28}(4)(2000), 345-384.

\item [{[9]}]
M. Brodmann, C. Matteotti and N. D. Minh, {\em Bounds for
cohomological deficiency functions  of   projective schemes over
Artinian rings}, Vietnam Journal of Mathematics {\bf 31}(1)(2003),
71-113.

\item [{[10]}]
M. Brodmann and R.Y. Sharp, {\em Local cohomology: an algebraic
introduction with geometric applications}, Cambridge University
Press (1998).

\item [{[11]}]
G. Caviglia, {\em Bounds on the Castelnuovo-Mumford regularity of
tensor products}, Proc. AMS. {\bf135} (2007) 1949- 1957.

\item [{[12]}]
G. Caviglia and E. Sbarra, \emph{Characteristic free bounds for
the Castelnuovo-Mumford regularity}, Compos. Math. {\bf141} (2005)
1365- 1373.

\item [{[13]}]
M. Chardin, A.L. Fall and U. Nagel, \emph{Bounds for the
Castelnuovo-Mumford regularity of modules}, Math. Z. {\bf258}
(2008) 69-80.

\item [{[14]}]
S. Fumasoli, \emph{Die K\"unnethrelation in abelschen Kategorien
und ihre Anwendung auf die Idealtransformation}, Diplomarbeit,
Universit\"at Z\"urich, 1999.

\item [{[15]}]
A. Grothendieck, \emph{S\'eminare de g\'eometrie alg\'ebrique IV},
Springer Lecture Notes 225, Springer (1971).

\item [{[16]}] L.T. Hoa,
\emph{Finiteness of Hilbert functions and bounds for the
Castelnuovo-Mumford regularity of initial ideals}, Trans. AMS.
{\bf360} (2008) 4519- 4540.

\item [{[17]}]
L. T. Hoa and E. Hyry {\em Castelnuovo-Mumford regularity of
canonical and deficiency modules},
J. Algebra {\bf 305} (2006) no.2, 877-900.

\item [{[18]}]
J. Kleiman  , \emph{Towards a numerical theory of ampleness}, Annals
of Math. {\bf84} (1966) 293- 344.

\item [{[19]}]
C. H. Linh, {\em   Upper bounds for Castelnuovo-Mumford regularity of
associated graded modules},  Comm. Algebra, {\bf 33}(6) (2005),
1817-1831.

\item [{[20]}]
S. Mac Lane, {\em   Homology},  Die Grundlehren der mathematischen Wissenschaften 114, Springer, Berlin (1963).

\item [{[21]}]
D. Mumford, \emph{Lectures on curves on an algebraic surface},
Annals of Math. Studies {\bf59}, Princeton University Press (1966).

\item [{[22]}]
M.E. Rossi, N.V. Trung and G. Valla, {\em Castelnuovo-Mumford
regularity and extended degree}, Trans. Amer. Math. Soc. {\bf 355}
(2003), no. 5, 1773-1786.

\item [{[23]}]
J. St\"ukrad and W. Vogel, \emph{Buchsbaum Rings and
Applications}, Deutscher Verlag der Wissenschaften, Berlin
(1986).}

\end{description}
\end{document}